\crefname{equation}{}{}
\newtheorem{thm}{Theorem}[section]
\crefname{thm}{Theorem}{Theorems}
\newtheorem{prop}[thm]{Proposition}
\crefname{prop}{Proposition}{Propositions}
\newtheorem{lem}[thm]{Lemma}
\crefname{lem}{Lemma}{Lemmas}
\newtheorem{cor}[thm]{Corollary}
\crefname{cor}{Corollary}{Corollaries}
\newtheorem{conj}[thm]{Conjecture}
\crefname{conj}{Conjecture}{Conjectures}
\newtheorem{prob}[thm]{Problem}
\theoremstyle{definition}
\newtheorem*{definition}{Definition}
\newtheorem*{example}{Example}
\theoremstyle{remark}
\newcommand{\til}{\widetilde}
\newcommand{\wh}{\widehat}
\newcommand{\ep}{\varepsilon}
\newcommand{\Tr}{\operatorname{Tr}}
\newcommand{\ceq}{\coloneqq}
\newcommand{\Z}{\mathbb{Z}}
\newcommand{\SL}{\operatorname{SL}}
\newcommand{\MES}{\mathcal{E}}
\newcommand{\MPF}{\mathcal{P}}
\newcommand{\Span}{\operatorname{Span}}
\newcommand{\C}{\mathbb{C}}
\newcommand{\R}{\mathbb{R}}
\newcommand{\Q}{\mathbb{Q}}
\newcommand{\HH}{\mathbb{H}}
\newcommand{\summ}[1]{\sum_{\substack{#1}}}
\numberwithin{equation}{section}
\newcommand\quotient[2]{
	\mathchoice
	{
		\text{\raise1ex\hbox{$#1$}\Big/\lower1ex\hbox{$#2$}}%
	}
	{
		#1\,/\,#2
	}
	{
		#1\,/\,#2
	}
	{
		#1\,/\,#2
	}
}
\begin{document}

\title{Multiple $\wp$-functions and their applications} 

\author[Kanno]{Hayato Kanno} 
\author[Kina]{Katsumi Kina}

\begin{abstract}
In this paper, we introduce and study multiple $\wp$-functions, which generalize the classical Weierstrass $\wp$-function to iterated sums over lattice points, and we establish explicit formulas expressing them in terms of single $\wp$-functions with coefficients given by multiple Eisenstein series. As an application, we derive some relations among multiple Eisenstein series and multiple zeta values by exploiting the double periodicity of the multiple $\wp$-functions.
\end{abstract}

\maketitle

\section{Introduction}

The purpose of this paper is to introduce and study the \emph{multiple $\wp$-function}, which can be viewed as a nested generalization of Weierstrass's $\wp$-function. This is defined for a non-negative integer $r\ge0$, positive integers $k_1,\dots,k_r\ge2$ and $\tau\in\HH$ as an iterated multiple sum by
\begin{equation}\label{eq:wp-def}
\wp_{k_1,\dots,k_r}(z;\tau)\ceq
\lim_{M\to\infty}\lim_{N\to\infty}\sum_{\substack{w_s\in\Z_M\tau+\Z_N\\ w_1\prec\dots\prec w_r}}\frac{1}{(z-w_1)^{k_1}(z-w_2)^{k_2}\cdots(z-w_r)^{k_r}},\qquad(r\ge1),
\end{equation}
and $\wp_\emptyset(z;\tau)\ceq1$ when $r=0$. Here, $\HH$ denotes the upper half-plane, $\Z_N=\{n\in\Z\mid |n|<N\}$, and the order $\prec$ on the lattice points in $L_\tau\ceq\Z\tau+\Z$ is defined by
\begin{align}
m_1\tau+n_1\prec m_2\tau+n_2 &\overset{\mathrm{def}}{\Longleftrightarrow}
\left\{\begin{array}{c}
m_1<m_2
\\
\text{or}
\\
m_1=m_2 \quad \text{and}\quad n_1<n_2
\end{array}\right. .
\end{align}
We call a tuple of positive integers $(k_1,\dots,k_r)$ the \emph{index}, $k_1+\cdots+k_r$ its \emph{weight} and $r$ its \emph{depth}. When the context is clear, we often omit $\tau$ and write $\wp_{k_1,\dots,k_r}(z)$ instead of $\wp_{k_1,\dots,k_r}(z;\tau)$.

For any integers $k_1,\dots,k_r\ge2$, the series \cref{eq:wp-def} converges on $\C\setminus L_\tau$, and defines a meromorphic function on $\C$ that has poles only at the lattice points in $L_\tau$. We show in \Cref{lem:doubly-multi-p} that our multiple $\wp$-function is an elliptic function whose poles occur only at $L_\tau$ and have an order at most $\max\{k_1,\dots,k_r\}$.
In this paper, we establish an explicit formula for multiple $\wp$-functions using in terms of the classical $\wp$-function, and derive several applications.
An important perspective is that multiple $\wp$-functions are closely related to \emph{multiple Eisenstein series}.
The multiple Eisenstein series $\til{G}_{k_1,\dots,k_r}(\tau)$, introduced by Gangl, Kaneko and Zagier (\cite{GKZ}) in the case $r=2$, is a holomorphic fention on the upper half plane, defined for $k_1,\dots,k_r\ge2$ by
\begin{align}
    \til{G}_{k_1,\dots,k_r}(\tau)\ceq\lim_{M\to\infty}\lim_{N\to\infty}\sum_{\substack{w_s\in\Z_M\tau+\Z_N\\ 0\prec w_1\prec\dots\prec w_r}}\frac{1}{w_1^{k_1}\cdots w_r^{k_r}}.
\end{align}
The study of multiple Eisenstein series is motivated in part by the modular phenomena of \emph{multiple zeta values} defined by
\begin{align}
\zeta(k_1,\dots,k_r)\ceq \sum_{0<n_1<\dots<n_r}\frac{1}{n_1^{k_1}\cdots n_r^{k_r}},
\end{align}
for positive integers $k_1,\dots,k_r$ with $k_r\geq 2$. Gangl--Kaneko--Zagier \cite{GKZ} and Kaneko \cite{Kaneko04} investigated the double shuffle relation satisfied by double zeta values for the double Eisenstein series $\til{G}_{k_1,k_2}(\tau)$ and described the \emph{modular phenomena} for double zeta values via double Eisenstein series. Their results were extended to the double Eisenstein series for higher level (congruence subgroup of level $N$) in \cite{KT} ($N=2$), \cite{Kina} ($N=4$) and in \cite{YZ} ($N$ : general), and have interesting applications to the theory of modular forms (see \cite{Tasaka}).
For higher depth, Bachmann--Tasaka \cite{BT} discovered a nontrivial correspondence between the Fourier expansion of multiple Eisenstein series and the Goncharov coproduct, which arises in a certain Hopf algebra, and also constructed shuffle regularized multiple Eisenstein series. Their results were extended for general level in \cite{Kanno}. In this paper, multiple Eisenstein series appear in the Taylor coefficients of multiple $\wp$-functions. In \Cref{sec:general-index}, we present some relations among multiple Eisenstein series via multiple $\wp$-functions.
The multiple $\wp$-functions can also be viewed as doubly periodic analogues of \emph{multitangent functions}\footnote{In \cite{Bouillot}, the multitangent functions are denoted by $\mathcal{T}e^{k_1,\dots,k_r}(z)$.} introduced by Bouillot \cite{Bouillot}, which are defined by
\begin{align}
    \Psi_{k_1,\dots,k_r}(z)\ceq\sum_{n_1<\cdots<n_r}\frac{1}{(z+n_1)^{k_1}\cdots(z+n_r)^{k_r}}
\end{align}
for $k_1,\dots,k_r\ge1$ with $k_1,k_r\ge2$.
He discovered that any multitangent function can be explicitly written as a linear combination of monotangent functions $\Psi_k(z)$ whose coefficients are multiple zeta values.
In the doubly periodic situation, we know that the analogous reduction occurs since the multiple $\wp$-functions are a elliptic functions. For instance, we have
\begin{align}
    \wp_{2,2}&=G_2\wp+\frac{1}{2}(G_2^2+5G_4),\\
    \wp_{2,2,2}&=\frac{1}{2}(G_2^2-G_4)\wp+\frac{1}{6}G_2^3+\frac{5}{2}G_2G_4-\frac{14}{3}G_6,\\
    \wp_{3,3}&=-3G_4\wp-\frac{21}{2}G_6,\\
    \wp_{2,3}&=-\frac{1}{2}\til{G}_2\wp^\prime-3\til{G}_3\wp-11\til{G}_5-2\til{G}_{2,3}
    \end{align}
where $\wp$ is the Weierstrass $\wp$-function, and $G_k$ is the Eisenstein series of weight $k$ defined respectively by
\begin{gather}
    \wp(z;\tau)\ceq\frac{1}{z^2}+\sum_{w\in L_\tau\setminus\{0\}}\left(\frac{1}{(z-w)^2}-\frac{1}{w^2}\right),\\
    G_k(\tau)\ceq\lim_{M\to\infty}\lim_{N\to\infty}\sum_{\substack{(m,n)\in\Z^2\setminus\{(0,0)\},\\|m|<M,|n|<N}}\frac{1}{(m\tau+n)^k}
\end{gather}
for $k\ge 2$. Note that $G_k$ vanishes when $k$ is odd, and $G_k=2\til{G}_k$ when $k$ is even. We obtained an explicit expression of the reduction with its coefficients as the multiple Eisenstein series, and this is the main result of this paper.

\begin{thm}[\cref{thm:depth-2-wp-formula}]\label{thm:rough-state-coeff-two}
For integers $k_1,\dots,k_r\geq 2$ with $k_1+\cdots+k_r=k$, we have
\begin{align}
\wp_{k_1,\dots,k_r}(z;\tau)
=&\sum_{i=1}^{r}\sum_{\substack{n_1,\dots,n_r\geq 2\\n_1+\cdots+n_r=k}}(-1)^{k_i+n_{i}+\cdots+n_r}\prod_{\substack{j=1\\j\neq i}}^{r}\binom{n_j-1}{k_j-1}
\til{G}_{n_{i-1},\dots,n_1}(\tau)\til{G}_{n_{i+1},\dots,n_r}(\tau)(\wp_{n_i}(z;\tau)-G_{n_i}(\tau))
\\
&+\sum_{i=1}^{r}\sum_{\substack{n_1,\dots,n_r\geq 0\\n_1+\cdots+n_r=k\\n_i=0}}(-1)^{k_i+n_{i+1}+\cdots+n_r}\prod_{\substack{j=1\\j\neq i}}^{r}\binom{n_j-1}{k_j-1}
\til{G}_{n_{i-1},\dots,n_1}(\tau)\til{G}_{n_{i+1},\dots,n_r}(\tau)
\\
&+\sum_{i=0}^{r}(-1)^{k_{i+1}+\cdots+k_r}\til{G}_{k_{i},\dots,k_1}(\tau)\til{G}_{k_{i+1},\dots,k_r}(\tau).
\end{align}
\end{thm}

In some special cases, quasi-modular forms appear in the coefficients of the reduction and they can be expressed via \emph{partition Eisenstein traces}, which have recently been studied. For example, in \cite{AOS}, Amdeberhan, Ono and Singh obtained expressions for two families of quasi-modular forms considered by Ramanujan in his Lost Notebook in terms of partition Eisenstein traces. Later, Matsusaka \cite{Mat} reinterpreted their results and obtained alternative expressions using partition Eisenstein traces. Related results can also be found in \cite{AGOS} and \cite{Bringmann}. 


In \Cref{sec:algebra}, we will also discuss the $\Q$-linear space spanned by multiple $\wp$-functions. In the case of single period, Bouillot \cite[Conjecture 3]{Bouillot} conjectured that the $\Q$-algebra of multitangent functions forms a module over the $\Q$-algebra of multiple zeta values, and Hirose \cite{Hirose} solved this conjecture. Bouillot also conjectured that this module is a graded module, and he noted it follows from so-called direct sum conjecture for multiple zeta values (\cite[Conjecture 10, Property 6]{Bouillot}). In this paper, we present analogous questions for the space of multiple $\wp$-functions.

Let $\MPF_k$ be the $\Q$-linear space spanned by multiple $\wp$-functions of weight $k$, and $\MPF$ be the space spanned by all multiple $\wp$-functions. The iterated sum expression produces an algebraic structure on $\MPF$. The product associated to this is called the harmonic (stuffle) product. For instance, we have
\begin{align}
    \wp_{2}\wp_{3}=\wp_{2,3}+\wp_{3,2}+\wp_{5}.
\end{align}
In general, we know that $\MPF_k\cdot\MPF_l\subset\MPF_{k+l}$ for any $k,l\ge2$, and therefore, $\MPF$ is a $\Q$-algebra. \Cref{thm:rough-state-coeff-two} and the fact of elliptic functions say that there is a canonical embedding
\begin{align}
    \MPF\hookrightarrow\left(\wp^\prime\Q[\wp]\oplus\Q[\wp]\right)\otimes\mathcal{E},
\end{align}
where $\mathcal{E}$ is the $\Q$-linear space spanned by all multiple Eisenstein series. Note that this morphism is not an isomorphism.

The organization of this paper is as follows. In \Cref{sec:ell func}, we review some facts from the classical theory of elliptic functions, and also explain some basic analytic properties of multiple $\wp$-functions. In \Cref{sec:explicit-formula}, we give some explicit expression of the multiple $\wp$-function with repeated index. A remarkable observation is that its coefficients are quasi-modular forms and can furthermore be expressed in terms of partition Eisenstein traces. We also give an explicit expression of the higher derivatives of classical $\wp$-function, which is due to a comment by Professor Toshiki Matsusaka. In \Cref{sec:general-index}, we give an explicit formula for the reduction into single $\wp$-functions by considering their generating function. There is an application to multiple Eisenstein series, we obtain the shuffle antipode relations for multiple Eisenstein series using only methods from complex analysis. In \Cref{sec:algebra}, we discuss the space of multiple $\wp$-functions $\MPF$. In \ref{appendix}, we discuss the Fourier expansion and modular transformation properties of some specific multiple $\wp$-functions, which are not directly related to the main content.

\section{Analytic properties of multiple $\wp$-functions}\label{sec:ell func}

In this section, we recall the standard facts from the theory of elliptic functions and provide some analytic properties of multiple $\wp$-functions.

\subsection{Elliptic functions}

In this subsection, we fix $\tau\in\HH$. Elliptic functions are doubly periodic meromorphic functions associated with a lattice $\Lambda\subset\C$. In this paper, we only consider the case $\Lambda=L_\tau=\Z\tau+\Z$, i.e. a function $f(z;\tau)$ (we often omit $\tau$) which is meromorphic with respect to $z$ is called an elliptic function if it satisfies the following functional equation:
\begin{align}
    f(z+1)=f(z+\tau)=f(z).
\end{align}
It is well known that any elliptic function can be written in terms of a rational function of $\wp$ and $\wp^\prime$, the Weierstrass $\wp$-function and its derivative with respect to $z$. Our multiple $\wp$-functions are also elliptic functions (\cref{lem:doubly-multi-p}), and one motivation of this paper is to obtain an explicit formula expressing multiple $\wp$-function in terms of the Weierstrass $\wp$-function and its (higher) derivatives.

\begin{lem}[see \cite{Lang}]\label{lem:ell func}
    \begin{enumerate}
        \item Let $f(z)$ be an elliptic function. Then, we have
        \begin{align}
            f(z)\in\C(\wp(z),\wp^\prime(z)).
        \end{align}
        \item If an elliptic function $f(z)$ is entire, then $f(z)$ is a constant. 
    \end{enumerate}
\end{lem}

Weierstrass $\wp$-function is a foundational building block. However, it belongs to a closely related family of three functions $\sigma$, $\zeta$, $\wp$. We recall (modified) Weierstrass $\sigma$-function, which is defined by
\begin{equation}\label{eq:def-sigma}
\sigma(z;\tau)
\ceq z\exp\left(-\frac{1}{2}G_2z^2\right)\prod_{0\neq w\in\Z\tau+\Z}
\left(1-\frac{z}{w}\right)\exp\left(\frac{z}{w}+\frac{1}{2}\frac{z^2}{w^2}\right)
= z\exp\left(-\sum_{n=0}^{\infty}\frac{G_{n+2}}{n+2}z^{n+2}\right).
\end{equation}
Moreover, we define $\mathcal{S}(z;\tau) \ceq \log \sigma(z;\tau)$ as a multivalued function. Then $\wp_2(z;\tau)$ can be recovered by differentiating $\mathcal{S}(z;\tau)$, i.e. we have
\begin{align}\label{eq:mathcalP}
-\frac{d^2}{dz^2}\mathcal{S}(z;\tau) = \wp_2(z;\tau).
\end{align}
For future reference, we also introduce (modified) Weierstrass $\zeta$-function, which is defined by
\begin{equation}\label{eq:Laurent-expansion-zeta}
\zeta(z;\tau)\ceq \frac{\sigma^\prime(z;\tau)}{\sigma(z;\tau)}=\frac{d}{dz}\mathcal{S}(z;\tau)=\frac{1}{z}-\sum_{n=0}^{\infty}G_{n+2}z^{n+1}.
\end{equation}
The classical $\wp$-function and $\wp_2$ can be also seen as a generating function of Eisenstein series:
\begin{align}\label{eq:Laurent-expansion-wp}
    \wp_2(z;\tau) = \wp(z;\tau)+G_2 = \frac{1}{z^2}+\sum_{k=0}^{\infty}(k+1)G_{k+2}(\tau)z^{k}.
\end{align}
And a single $\wp$-function of weight $k\geq3$ is a higher derivative of Weierstrass $\wp$-function:
\begin{align}
    \wp_k(z;\tau)=\frac{(-1)^k}{(k-1)!}\frac{d^{k-2}}{dz^{k-2}}\wp(z;\tau).
\end{align}

\subsection{Basic properties of multiple $\wp$-functions}

In this subsection, we propose some basic properties of multiple $\wp$-functions. We begin by defining multivariable $\wp$-functions and restricted multivariable $\wp$-functions as multivariable meromorphic functions, and then present several lemmas.

\begin{definition}
For an integer $r\geq 1$, integers $k_1,\dots,k_r\geq 2$, and $\tau\in\mathbb{H}$, we define a multivariable $\wp$-function and a restricted multivariable $\wp$-function respectively by
\begin{align}
\wp_{k_1,\dots,k_r}(z_1,\dots,z_r;\tau)
&\ceq \lim_{M\to\infty}\lim_{N\to\infty}\sum_{\substack{w_s\in\Z_M\tau+\Z_N\\ w_1\prec\dots\prec w_r}}\frac{1}{(z_1-w_1)^{k_1}\cdots(z_r-w_r)^{k_r}},
\\
\til{\wp}_{k_1,\dots,k_r}(z_1,\dots,z_r;\tau)
&\ceq \lim_{M\to\infty}\lim_{N\to\infty}\sum_{\substack{w_s\in\Z_M\tau+\Z_N\\ 0\prec w_1\prec\dots\prec w_r}}\frac{1}{(z_1-w_1)^{k_1}\cdots(z_r-w_r)^{k_r}}.
\end{align}
Moreover, we set $\wp_{k_1,\dots,k_r}(-)=\til{\wp}_{k_1,\dots,k_r}(-)=1$ if $r=0$.
\end{definition}
Note that $\til{\wp}_{k_1,\dots,k_r}(z_1,\dots,z_r)$ is holomorphic at $z_i=0$ for each $i$.

\begin{lem}\label{lem:def-multi-p}
For integers $k_1,\dots,k_r\geq2$, the defining series for $\til{\wp}_{k_1,\dots,k_r}(z;\tau)$ converges uniformly on any compact subsets of $(\mathbb{C}\setminus\til{L_\tau})^r$, where $\til{L_\tau}\ceq L_\tau\cap(\HH\cup\R_{>0})$, and thus $\til{\wp}_{k_1,\dots,k_r}(z;\tau)$ is a meromorphic function that has poles only at the lattice points.
\end{lem}

\begin{proof}
If $k_r\geq 3$, the series converges absolutely and uniformly on any compact subsets of $(\mathbb{C}\setminus \til{L_\tau})^r$. Hence, we consider the case $k_r=2$. In this case, we examine the following series:
\begin{align}
H_{k_1,\dots,k_{r-1}}(z_1,\dots,z_r;\tau)
&\ceq \lim_{M\to\infty}\lim_{N\to\infty}\sum_{\substack{w_s\in\Z_M\tau+\Z_N\\ 0\prec w_1\prec\dots\prec w_r}}\frac{-1}{(z_1-w_1)^{k_1}\cdots(z_{r-1}-w_{r-1})^{k_{r-1}}
(z_r-(w_r-1))(z_r-w_r)}.
\end{align}
Then, we have
\begin{align}\label{eq:H}
H_{k_1,\dots,k_{r-1}}(z_1,\dots,z_r;\tau)
&= \lim_{M\to\infty}\sum_{\substack{w_s\in\Z_M\tau+\Z\\ 0\prec w_1\prec\dots\prec w_{r-1}}}\frac{1}{(z_1-w_1)^{k_1}\cdots(z_{r-1}-w_{r-1})^{k_{r-1}}
(z_r-w_{r-1})}.
\end{align}
since the following identity holds
\begin{align}
\frac{-1}{(z_r-(w_r-1))(z_r-w_r)}
=\frac{1}{z_r-(w_r-1)}-\frac{1}{z_r-w_r}.
\end{align}
Since there exist some constant $C\in\mathbb{R}$ such that,
\begin{align}
\lim_{M\to\infty}\sum_{\substack{w_s\in\Z_M\tau+\Z\\ 0\prec w_1\prec\dots\prec w_{r-1}}}\left|\frac{1}{(z_1-w_1)^{k_1}\cdots(z_{r-1}-w_{r-1})^{k_{r-1}}
(z_r-w_{r-1})}\right|
\leq\sum_{\substack{w_s\in\Z\tau+\Z\\ 0\prec w_1\prec\dots\prec w_{r-1}}}\frac{C}{|w_1|^{k_1}\cdots|w_{r-1}|^{k_{r-1}+1}}
\end{align}
for any given compact subset of $(\mathbb{C}\setminus \til{L_\tau})^r$, the series in \eqref{eq:H} converges absolutely and uniformly on any compact subset of $(\mathbb{C}\setminus \til{L_\tau})^r$, and hence defines a meromorphic function. Furthermore, the series 
\begin{align}\label{eq:H-tilde}
\til{H}_{k_1,\dots,k_{r-1}}(z_1,\dots,z_r;\tau)
&\ceq \sum_{\substack{w_s\in\Z\tau+\Z\\ 0\prec w_1\prec\dots\prec w_r}}\frac{1}{(z_1-w_1)^{k_1}\cdots(z_{r-1}-w_{r-1})^{k_{r-1}}
(z_r-(w_r-1))(z_r-w_r)^2}
\end{align}
also converges absolutely and uniformly on any compact subset of $(\mathbb{C}\setminus \til{L_\tau})^r$. Indeed, for any compact subspace $K\subset\C\setminus\til{L_\tau}$, there exist a constant $C^\prime\in\R$ such that
\begin{align}
    \frac{1}{|z_r-(w_r-1)|}\leq\frac{C^\prime}{|z_r-w_r|}
\end{align}
for any $z_r\in K$ and $w_r\in\Z\tau+\Z$. And thus, the series in \eqref{eq:H-tilde} converges absolutely. Since 
\begin{align}\label{eq:wp tilde}
\til{\wp}_{k_1,\dots,k_{r-1},2}(z_1,\dots,z_r;\tau)=\til{H}_{k_1,\dots,k_{r-1}}(z_1,\dots,z_r;\tau)-H_{k_1,\dots,k_{r-1}}(z_1,\dots,z_r;\tau),
\end{align}
the claim follows.
\end{proof}

A (non-restricted) multivariable $\wp$-function can be written via restricted multivariable $\wp$-functions.

\begin{lem}\label{lem:p-to-p-tilde} For positive integers $k_1,\dots,k_r\geq2$, we have
\begin{align}
\wp_{k_1,\dots,k_r}(z_1,\dots,z_r)
=\sum_{i=0}^{r}&(-1)^{k_1+\cdots+k_{i}}\til{\wp}_{k_i,\dots,k_1}(-z_{i},\dots,-z_{1})\til{\wp}_{k_{i+1},\dots,k_r}(z_{i+1},\dots,z_{r})
\\
&+\sum_{i=1}^{r}\frac{1}{z_i^{k_i}}(-1)^{k_1+\cdots+k_{i-1}}\til{\wp}_{k_{i-1},\dots,k_1}(-z_{i-1},\dots,-z_{1})\til{\wp}_{k_{i+1},\dots,k_r}(z_{i+1},\dots,z_{r}).
\end{align}
And therefore, $\wp_{k_1,\dots,k_r}(z;\tau)$ is a meromorphic function on $\C^r$. In particular, $\wp_{k_1,\dots,k_r}(z;\tau)$ is a meromorphic function that has poles only at the lattice points. 
\end{lem}
\begin{proof}
From the decomposition of the summation
\begin{align}\label{eq:sum decomp}
\sum_{\substack{w_s\in\Z_M\tau+\Z_N\\ w_1\prec\dots\prec w_r}}
&= \sum_{\substack{w_s\in\Z_M\tau+\Z_N\\ 0\prec w_1\prec\dots\prec w_r}}
+\sum_{\substack{w_s\in\Z_M\tau+\Z_N\\ w_1\prec 0\prec w_2\dots\prec w_r}}
+\cdots+
\sum_{\substack{w_s\in\Z_M\tau+\Z_N\\ w_1\prec\dots\prec w_r\prec 0}}
\\
&+\sum_{\substack{w_s\in\Z_M\tau+\Z_N\\ 0=w_1\prec\dots\prec w_r}}
+\sum_{\substack{w_s\in\Z_M\tau+\Z_N\\ w_1\prec 0=w_2\dots\prec w_r}}
+\cdots+
\sum_{\substack{w_s\in\Z_M\tau+\Z_N\\ w_1\prec\dots\prec w_{r-1}\prec 0=w_r}},
\end{align}
we have the statement.
\end{proof}

These functions naturally arise as generating functions for multiple $\wp$-functions and multiple Eisenstein series. In the following, the notation $|x|\ll_{\tau}1$ means that $|x|<\ep$ for some sufficiently small $\ep>0$ depending on $\tau$. 

\begin{lem}\label{lem:p-til-expansion}
For $\tau\in\HH$ and $\boldsymbol{x}=(x_1,\dots,x_r)$ with $|x_i|\ll_{\tau} 1$ for each $i$, we have
\begin{align}\label{eq:mpf gen func}
\wp_{\{2\}^r}(z-x_1,\dots,z-x_r;\tau) &= \sum_{n_1,\dots,n_r\geq 0}(n_1+1)\cdots(n_r+1)\wp_{n_1+2,\dots,n_r+2}(z;\tau)x_1^{n_1}\cdots x_r^{n_r}.
\end{align}
Moreover, for $\tau\in\HH$ and $\boldsymbol{x}=(x_1,\dots,x_r)$ with $|x_i|\ll_{\tau} 1$ for each $i$, we have
\begin{align}\label{eq:p til exp}
\til{\wp}_{\{2\}^r}(x_1,\dots,x_r;\tau) &= \sum_{n_1,\dots,n_r\geq 0}(n_1+1)\cdots(n_r+1)\til{G}_{n_1+2,\dots,n_r+2}(\tau)x_1^{n_1}\cdots x_r^{n_r},
\end{align}
and more generally, by differentiating with respect to $x_i$'s, we have
\begin{align}
\til{\wp}_{k_1,\dots,k_r}(x_1,\dots,x_r;\tau)
= \sum_{n_1,\dots,n_r\geq 0}\prod_{j=1}^{r}(-1)^{k_j}\binom{n_j+k_j-1}{k_j-1}
\til{G}_{n_1+k_1,\dots,n_r+k_r}(\tau)x_1^{n_1}\cdots x_r^{n_r}
\end{align}
for positive integers $k_1,\dots,k_r\geq2$.
\end{lem}

\begin{proof} 
Here, we consider only the case of $\til{\wp}_{\{2\}^r}(x_1,\dots,x_r;\tau)$, but the proof of $\wp_{\{2\}^r}(z-x_1,\dots,z-x_r;\tau)$ can be carried out in the same way. Formally, we have
\begin{align}
\til{\wp}_{\{2\}^r}(x_1,\dots,x_r;\tau)
&= \lim_{M\to\infty}\lim_{N\to\infty}\sum_{\substack{w_s\in\Z_M\tau+\Z_N\\ 0\prec w_1\prec\dots\prec w_r}}\sum_{n_1,\dots,n_r\geq 0}
\frac{(n_1+1)\cdots (n_r+1)}{w_1^{n_1+2}\cdots w_r^{n_r+2}}x_1^{n_1}\cdots x_r^{n_r}
\\
&= \sum_{n_1,\dots,n_r\geq 0}(n_1+1)\cdots(n_r+1)\til{G}_{n_1+2,\dots,n_r+2}(\tau)x_1^{n_1}\cdots x_r^{n_r},
\end{align}
since the Taylor expansion
\begin{align}
\frac{1}{(x_s-w_s)^{2}}
= \frac{1}{w_s^2(1-\frac{x_s}{w_s})^{2}}
= \sum_{n_s=0}^{\infty}(n_s+1)\frac{1}{w_s^{n_s+2}}x^{n_s}.
\end{align}
Therefore, it suffices to check that the limits $N\to\infty$ and $M\to\infty$ can be interchanged with the Taylor expansions. Since the series converges absolutely with respect to $N\to\infty$, this limit can be interchanged with the Taylor expansions. Furthermore, using the relation \cref{eq:wp tilde}, we can also interchange the limit $M\to\infty$ with the Taylor expansions, since both equations \cref{eq:H,eq:H-tilde} converge absolutely.
\end{proof}

By the following lemma and \cref{lem:ell func}, we know that any multiple $\wp$-function $\wp_{k_1,\dots,k_r}(z)$ can be written as a rational function of $\wp$ and $\wp^\prime$.

\begin{lem}\label{lem:doubly-multi-p} For any lattice point $\lambda\in L_\tau$, we have
\begin{align}
\wp_{k_1,\dots,k_r}(z_1+\lambda,\dots,z_r+\lambda;\tau) &= \wp_{k_1,\dots,k_r}(z_1,\dots,z_r;\tau).
\end{align}
In particular, a multiple $\wp$-function $\wp_{k_1,\dots,k_r}(z;\tau)$ is an elliptic function.
\end{lem}

\begin{proof}
For a word $w=x_1\dots x_{r-1}$ ($x_i\in\{x,y\}$) of length $r-1$ and integers $k_1,\dots,k_r\geq 2$, we define
$$\wp_{k_1,\cdots,k_r}^w(z_1,\dots,z_r;\tau)
\ceq \lim_{M\to\infty} \lim_{N\to\infty}
\sum_{\substack{m_s\in\Z_M\tau+\Z_N\\ w_{i+1}-w_i\in P_{x_i}\\1\leq i\leq r-1}}
\frac{1}{(z_1+w_1)^{k_1}\cdots(z_r+w_r)^{k_r}},$$
where $P_{x} = \{n\in\Z\mid n>0\}$ and  $P_{y} = \{m\tau+n\in L_\tau\mid m>0,n\in\Z\}$. Then, we can check that
$$\wp_{k_r,\dots,k_1}(z_r,\dots,z_1;\tau) = \sum_{w}\wp_{k_1,\dots,k_r}^w(z_1,\dots,z_r;\tau),$$
where the summation on the right-hand side runs over all words of length $r-1$. Furthermore, for a word $$w=\underbrace{x\dots x}_{t_1-1}y\underbrace{x\dots x}_{t_2-t_1-1}y\dots
\underbrace{x\dots x}_{t_{h-1}-t_{h-2}-1}y\underbrace{x\dots x}_{r-t_{h-1}-1},$$
we can express $\wp_{k_1,\dots,k_r}^w(z_1,\dots,z_r;\tau)$ as
\begin{equation}\label{eq:pw-with-word}
\wp_{k_1,\dots,k_r}^w(z_1,\dots,z_r;\tau)
= \lim_{M\to\infty}\sum_{\substack{-M<m_1<\dots<m_{h}<M\\n_{t_{i-1}}<\dots<n_{t_i-1}\\1\leq i\leq h}}
\prod_{i=1}^{h}\prod_{j=t_{i-1}}^{t_{i}-1}\frac{1}{(z_j+m_i\tau+n_j)^{k_{j}}},
\end{equation}
where $t_0=1$ and $t_{h}=r+1$. Especially, we have
\begin{align}
\wp_{k_1,\dots,k_r}^w(z;\tau)
&= \lim_{M\to\infty}\sum_{-M<m_1<\dots<m_h<M}\Psi_{k_1,\dots,k_{t_1-1}}(z+m_1\tau)\cdots \Psi_{k_{t_{h-1}},\dots,k_{r}}(z+m_{h}\tau).
\end{align}
Now we prove that
\begin{align}
\wp^w_{k_1,\dots,k_r}(z_1+\lambda,\dots,z_r+\lambda;\tau) &= \wp^w_{k_1,\dots,k_r}(z_1,\dots,z_r;\tau)
\end{align}
for any lattice point $\lambda\in L_\tau$ and any word $w$. It suffices to prove the cases $\lambda=1$ and $\lambda=\tau$. From \cref{eq:pw-with-word}, the case $\lambda=1$ is obvious. Furthermore, we have
\begin{align}
&\wp_{k_1,\dots,k_r}^w(z_1+\tau,\dots,z_r+\tau;\tau)
-\wp_{k_1,\dots,k_r}^w(z_1,\dots,z_r;\tau)
\\
&= \lim_{M\to\infty}\Biggl(\sum_{\substack{-M+1<m_1<\dots<m_h<M+1\\n_{t_{i-1}}<\dots<n_{t_i-1}\\1\leq i\leq h}}-\sum_{\substack{-M<m_1<\dots<m_h<M\\n_{t_{i-1}}<\dots<n_{t_i-1}\\1\leq i\leq h}}\Biggr)
\prod_{i=1}^{h}\prod_{j=t_{i-1}}^{t_{i}-1}\frac{1}{(z_j+m_i\tau+n_j)^{k_{j}}}
\\
&= \lim_{M\to\infty}\Biggl(
\sum_{\substack{-M-1<m_1<\dots<m_{h-1}<M+1\\n_{t_{i-1}}<\dots<n_{t_i-1}\\1\leq i\leq h}}
-\sum_{\substack{m_1<m_2<\dots<m_h<M+1\\m_1\in\{-M,-M+1\}\\n_{t_{i-1}}<\dots<n_{t_i-1}\\1\leq i\leq h}}
-\sum_{\substack{-M<m_1<\dots<m_h<M\\n_{t_{i-1}}<\dots<n_{t_i-1}\\1\leq i\leq h}}\Biggr)
\prod_{i=1}^{h}\prod_{j=t_{i-1}}^{t_{i}-1}\frac{1}{(z_j+m_i\tau+n_j)^{k_{j}}}
\\
&=0
\end{align}
since
$$\sum_{n_1<\dots<n_{t_1-1}}
\prod_{j=1}^{t_1-1}\frac{1}{(z_j-M\tau+n_j)^{k_{j}}}\longrightarrow 0 \quad \text{as} \quad M\to \infty.$$
Hence, we have the case $\lambda=\tau$.
\end{proof}

\section{Explicit formula for the multiple $\wp$-function with repeated index}\label{sec:explicit-formula}

In this section, we provide an explicit formula expressing multiple $\wp$-functions with repeated indices in terms of single $\wp$-functions using partition traces. We recall that a partition of $r$ is any non-increasing sequence of positive integers $\lambda=(\lambda_1,\lambda_2,\dots,\lambda_s)$ that sum to $r$, denoted $\lambda\vdash r$. Equivalently, we use the notation $\lambda=(1^{m_1},2^{m_2},\dots,r^{m_r})\vdash r$, where $m_k$ is the multiplicity of $k$. The index $\{h\}^r$ denotes $\underbrace{h,\dots,h}_r$.

\begin{lem}[{\cite[Example 5.2.10]{Stan}}]\label{lem:3.1} Let $x_k$ ($k\ge1$) be variables. As a formal power series in $Y$, we have
\begin{align}
\exp\left(\sum_{k\geq 1}x_kY^k\right)
&=\sum_{r\ge0}Z(\mathfrak{S}_r)\left(x_1,2x_2,\dots,rx_r\right)Y^r,
\end{align}
where $Z(\mathfrak{S}_r)$ is a \emph{cycle index polynomial}\footnote{In \cite{Stan}, the polynomial is defined in terms of permutations. Alternatively, we can reformulate this definition using partitions (see \cite[Lemma 3.1]{AOS}).} for the symmetric group $\mathfrak{S}_r$ defined by
\begin{align}
    Z(\mathfrak{S}_r)(x_1,\dots,x_r)\ceq\summ{\lambda\vdash r\\\lambda=(1^{m_1},\dots,r^{m_r})}\prod_{k=1}^r\frac{1}{m_k!}\left(\frac{x_k}{k}\right)^{m_k}.
\end{align}
\end{lem}

\begin{lem}\label{lem:multi-to-deri} For any integers $r\geq 1$ and $h\geq 2$, we have
\begin{align}
\wp_{\{h\}^r}(z)
= (-1)^r \sum_{\substack{\lambda\vdash r\\ \lambda=(1^{m_1},\dots,r^{m_r})}}\prod_{k=1}^{r}\frac{1}{m_k!}\left(\frac{(-1)^{hk-1}h}{(hk)!}\wp_2^{(hk-2)}(z)\right)^{m_k}.
\end{align}
\end{lem}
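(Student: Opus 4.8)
The plan is to recognize $\wp_{\{h\}^r}(z)$ as an elementary symmetric function and then convert to power sums, which turn out to be exactly single $\wp$-functions. For each $w\in L_\tau$ set $t_w\ceq(z-w)^{-h}$; then by \cref{eq:wp-def} with all indices equal to $h$, the iterated sum over $w_1\prec\cdots\prec w_r$ is precisely the $r$-th elementary symmetric function $e_r$ in the totally $\prec$-ordered family $\{t_w\}_{w\in L_\tau}$. Correspondingly, the power sums are $p_k\ceq\sum_w t_w^k=\sum_w(z-w)^{-hk}=\wp_{hk}(z)$, the single $\wp$-function of index $hk\geq2$. So the lemma should follow from the classical passage between elementary symmetric functions and power sums, combined with the derivative formula for $\wp_{hk}$ listed before the statement.

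First I would establish the algebraic identity at the level of finite truncations. For the finite lattice $\Z_M\tau+\Z_N$, the classical generating-function identity gives
\[
\sum_{r\geq0}e_r^{(M,N)}X^r=\prod_{w\in\Z_M\tau+\Z_N}(1+t_wX)=\exp\left(\sum_{k\geq1}\frac{(-1)^{k-1}}{k}p_k^{(M,N)}X^k\right),
\]
where $e_r^{(M,N)}$ and $p_k^{(M,N)}$ are the truncated elementary symmetric functions and power sums. Applying \Cref{lem:3.1} with $x_k=\frac{(-1)^{k-1}}{k}p_k^{(M,N)}$ and comparing the coefficient of $X^r$ yields
\[
e_r^{(M,N)}=\sum_{\substack{\lambda\vdash r\\\lambda=(1^{m_1},\dots,r^{m_r})}}\prod_{k=1}^r\frac{1}{m_k!}\left(\frac{(-1)^{k-1}}{k}p_k^{(M,N)}\right)^{m_k}.
\]
Since the right-hand side is a fixed polynomial relation of degree bounded by $r$ (independent of $M,N$), I would pass to the iterated limit $\lim_{M\to\infty}\lim_{N\to\infty}$: the left-hand side tends to $\wp_{\{h\}^r}(z)$ and each $p_k^{(M,N)}$ tends to $\wp_{hk}(z)$ by the very definitions of these objects, so the identity survives in the limit.

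It then remains to rewrite each power sum via the listed derivative formula. For $hk\geq3$ we have $\wp_{hk}=\frac{(-1)^{hk}}{(hk-1)!}\wp^{(hk-2)}$, and since $\wp_2=\wp+G_2$ differs from $\wp$ by a constant, $\wp^{(hk-2)}=\wp_2^{(hk-2)}$ whenever $hk-2\geq1$; the remaining case $hk=2$ (only $h=2$, $k=1$) is $\wp_2=\wp_2^{(0)}$, captured by the same formula. Thus $p_k=\frac{(-1)^{hk}}{(hk-1)!}\wp_2^{(hk-2)}$ for all $k$, and using $\frac{(-1)^{hk-1}h}{(hk)!}=\frac{(-1)^{hk-1}}{k(hk-1)!}$ a direct sign computation gives
\[
\frac{(-1)^{k-1}}{k}\,p_k=(-1)^{k}\,\frac{(-1)^{hk-1}h}{(hk)!}\,\wp_2^{(hk-2)}.
\]
Raising to the $m_k$-th power, taking the product over $k$, and using $\sum_k k\,m_k=r$ extracts a global factor $(-1)^{\sum_k k m_k}=(-1)^r$, which matches the claimed formula exactly.

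The hard part will be the convergence bookkeeping rather than the algebra: the power sum $p_1$ in the case $h=2$ is only conditionally convergent (it is the Eisenstein-regularized $\wp_2=\wp+G_2$), so one cannot manipulate the infinite products and sums directly. Working with finite truncations and then invoking the iterated limit $\lim_M\lim_N$ — the same order of limits used to define both $\wp_{\{h\}^r}$ and the single $\wp_{hk}$ — is precisely what sidesteps this issue, since for fixed $M,N$ everything is a finite algebraic identity and the limit of each $p_k^{(M,N)}$ is unambiguous. I would take care to verify that the inner limit $\lim_N p_k^{(M,N)}$ exists for every $k$ and $M$ before passing to $\lim_M$, so that the polynomial relation may be transported term by term through both limits.
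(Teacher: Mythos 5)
Your proof is correct, and the sign bookkeeping (including the boundary case $hk=2$, where $\wp_2=\wp_2^{(0)}$ absorbs the $G_2$ shift) checks out. The underlying algebra is the same as the paper's: both proofs reduce to the exponential/Newton identity $\sum_{r\ge0}(-1)^r\wp_{\{h\}^r}Y^{hr}=\exp\bigl(-\sum_{k\ge1}\tfrac{\wp_{hk}}{k}Y^{hk}\bigr)$ followed by \cref{lem:3.1}. Where you diverge is in how that identity is justified. The paper treats the multiple $\wp$-functions as elements of a quasi-shuffle (harmonic) algebra and quotes Corollary 5.1 of Hoffman--Ihara \cite{HI}, which packages the elementary-symmetric/power-sum relation abstractly; your route instead derives the classical identity $\prod_w(1+t_wX)=\exp\bigl(\sum_k\tfrac{(-1)^{k-1}}{k}p_kX^k\bigr)$ at the level of the finite truncations $\Z_M\tau+\Z_N$ and transports it through the iterated limit $\lim_M\lim_N$. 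This is more elementary and self-contained, and it has the additional merit of making explicit the analytic point the paper leaves implicit: the validity of applying a formal quasi-shuffle identity to the only conditionally convergent (Eisenstein-ordered) sums, in particular the power sum $p_1=\wp_2$ when $h=2$. Your check that $\lim_N p_k^{(M,N)}$ exists for each fixed $M$ (absolute convergence in $n$ since all exponents are $\ge 2$) before taking $\lim_M$ is exactly what is needed for the polynomial relation to pass through both limits, so the argument is complete; what the paper's citation buys in exchange is brevity and a uniform algebraic framework that it reuses elsewhere (e.g., in \cref{thm:genelating-function}).
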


\begin{proof}
    As mentioned in the introduction, multiple $\wp$-functions satisfy the harmonic product. By the results from Hoffman--Ihara (\cite{HI}, Corollary 5.1), it holds
    \begin{align}
    \sum_{r\ge0}(-1)^r\wp_{\{h\}^r}(z;\tau)Y^{hr}
    &=\exp\left(-\sum_{k\ge1}\frac{\wp_{hk}(z;\tau)}{k}Y^{hk}\right)
    \\
    &=\exp\left(\sum_{k\ge1}\frac{(-1)^{hk-1}h}{(hk)!}\wp_2^{(hk-2)}(z;\tau)Y^{hk}\right).
    \end{align}
    Applying \cref{lem:3.1} to $x_k=\frac{(-1)^{hk-1}h}{(hk)!}\wp_2^{(hk-2)}(z)$ and $Y=Y^h$, we have
    \begin{align}
        \exp\left(\sum_{k\ge1}\frac{(-1)^{hk-1}h}{(hk)!}\wp_2^{(hk-2)}(z;\tau)Y^{hk}\right)=\sum_{r\ge0}\summ{\lambda\vdash r\\\lambda=(1^{m_1},\dots,r^{m_r})}\prod_{k=1}^r\frac{1}{m_k!}\left(\frac{(-1)^{hk-1}h}{(hk)!}\wp_2^{(hk-2)}(z)\right)^{m_k}Y^{hr}.
    \end{align}
    Comparing the coefficients, we have the conclusion.
\end{proof}

It is well known that the even-order derivatives $\wp^{(2k)}(z)$ (for $k\geq 0$) can be expressed as polynomials in $\wp(z)$ with coefficients given by modular forms. This fact follows from a differential equation satisfied by $\wp(z)$, and can be stated as follows.

\begin{lem}\label{lem:deri-to-poly}
For a non-negative integer $k$, $\wp_2^{(2k)}(z)$ can be expressed as a polynomial of degree $k+1$ in $\wp(z)$. More precisely, $\wp_2^{(2k)}(z)$ can be described as follows:
\begin{enumerate}
\item $\wp_2^{(0)}(z) = \wp(z) + G_2$.
\item For an integer $k\geq 1$, if $\wp_2^{(2k-2)}(z)=\sum_{q=0}^k\alpha_{k,q}\wp^q(z)$, then
$$\wp_2^{(2k)}(z) = \sum_{q=1}^{k}\alpha_{k,q}(2q(2q+1)\wp^{q+1}(z)-30q(2q-1)G_{4}\wp^{q-1}(z)-140q(q-1)G_{6}\wp^{q-2}(z)).$$
\end{enumerate}
In particular, $\wp_2^{(2k)} = (2k+1)!\wp^{k+1}+(\text{polynomial of degree }k-1\text{ in }\wp)$, where the coefficient of $\wp^q$ is a modular from of weight $2(k-q+1)$ for $k>0$.
\end{lem}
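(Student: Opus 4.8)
The plan is to reduce everything to the two classical differential equations for the Weierstrass function, $\wp''(z)=6\wp(z)^2-30G_4$ and $\wp'(z)^2=4\wp(z)^3-60G_4\wp(z)-140G_6$. Both can be read off directly from the Laurent expansion \cref{eq:Laurent-expansion-wp}: in each case the difference of the two sides is an elliptic function that is holomorphic at the origin with vanishing constant term, hence identically zero. Statement (1) is immediate since $\wp_2=\wp+G_2$. For the recursion (2), the essential computation is the second derivative of a pure power: by the product and chain rules $\frac{d^2}{dz^2}\wp^q=q(q-1)\wp^{q-2}\wp'(z)^2+q\wp^{q-1}\wp''(z)$, and substituting the two differential equations and collecting the powers of $\wp$ produces exactly $2q(2q+1)\wp^{q+1}-30q(2q-1)G_4\wp^{q-1}-140q(q-1)G_6\wp^{q-2}$. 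Since $G_2$ is constant in $z$, we have $\wp_2^{(2k)}=\frac{d^2}{dz^2}\wp_2^{(2k-2)}$ for every $k\ge1$; applying the displayed power formula termwise to $\wp_2^{(2k-2)}=\sum_{q=0}^k\alpha_{k,q}\wp^q$ then yields (2), the $q=0$ term being annihilated by the derivative, which is why the output sum begins at $q=1$.

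For the final ``in particular'' assertion I would argue by induction on $k$. The recursion shows that the top-degree contribution to $\wp_2^{(2k)}$ comes only from the $\wp^{q+1}$-part applied to the leading term of $\wp_2^{(2k-2)}$; writing $c_k$ for the leading coefficient this gives degree $k+1$ and the recurrence $c_k=2k(2k+1)c_{k-1}$ with $c_0=1$, which telescopes to $c_k=\prod_{j=1}^k 2j(2j+1)=(2k+1)!$.

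The statement about weights I would obtain from homogeneity together with the recursion. Viewing $\wp(z;\tau)$ as having weight $2$ and each $z$-derivative as raising the weight by $1$, the function $\wp_2^{(2k)}(z;\tau)$ is homogeneous of weight $2k+2$, while $\wp^p$ has weight $2p$; hence the coefficient of $\wp^p$ necessarily has weight $2(k-p+1)$. The recursion confirms this bookkeeping, as it multiplies coefficients only by $G_4$ or $G_6$ (of weights $4$ and $6$) while lowering the power of $\wp$ accordingly. Starting from the base case $\wp_2^{(2)}=6\wp^2-30G_4$, whose coefficients are genuine modular forms, the induction shows that every coefficient of $\wp_2^{(2k)}$ with $k\ge1$ is a genuine modular form of the stated weight. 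In particular the coefficient of $\wp^k$ would be a modular form of weight $2$; since there is no nonzero modular form of weight $2$ for $\SL_2(\Z)$, it vanishes, giving precisely the claimed drop to degree $k-1$ for the lower-order part.

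The one point requiring care — and the only place where the argument could go wrong if handled naively — is the quasi-modular Eisenstein series $G_2$ occurring in the base value $\wp_2^{(0)}=\wp+G_2$. Since it sits in the $q=0$ slot it is annihilated after a single differentiation, so it never propagates into $\wp_2^{(2k)}$ for $k\ge1$; the modularity induction must therefore be anchored at $k=1$ rather than $k=0$. Once this is observed, the remaining verifications are routine bookkeeping.
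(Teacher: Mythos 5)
Your proof is correct and takes exactly the route the paper intends: the paper states this lemma without a written proof, noting only that it is well known and ``follows from the differential equation satisfied by $\wp(z)$,'' and your argument---combining $\frac{d^2}{dz^2}\wp^q=q(q-1)\wp^{q-2}(\wp'(z))^2+q\wp^{q-1}\wp''(z)$ with $\wp''=6\wp^2-30G_4$ and $(\wp')^2=4\wp^3-60G_4\wp-140G_6$---is precisely that standard derivation, and your algebra reproduces the stated recursion exactly. Your supplementary details (the telescoping $c_k=\prod_{j=1}^k 2j(2j+1)=(2k+1)!$, the vanishing of the $\wp^k$-coefficient because $M_2(\SL_2(\Z))=\{0\}$, and the observation that the quasi-modular $G_2$ sits in the $q=0$ slot and is killed by the first differentiation, so the modularity induction anchors at $k=1$) are all correct fillings-in of what the paper leaves implicit.
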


\begin{example}
\begin{align}
\wp_2^{(2)} &= 2!(3\wp^{2}-15G_4)
\\
\wp_2^{(4)} &= 4!(5\wp^{3}-45G_{4}\wp-70G_{6})
\\
\wp_2^{(6)} &= 6!(7\wp^{4}-84G_4\wp^2-140G_6\wp+45G_4^2)
\\
\wp_2^{(8)} &= 8!(9\wp^5-135G_4\wp^3-225G_6\wp^2+270G_4^2\wp+495G_4G_6)
\\
\wp_2^{(10)} &= 10!(11\wp^6-198G_4\wp^4-330G_6\wp^3+693G_4^2\wp^2+1710G_4G_6\wp+700G_6^2-90G_4^3)
\end{align}
\end{example}

\begin{prop}\label{thm:wp-form-two}
For $h\ge2$ and $r\ge1$, there exist quasi-modular forms $f_{s,t}^{(h,r)}$ of weight $hr-2s-3t$ such that
$$\wp_{\{h\}^r}(z) = \sum_{\substack{s\geq 0,\ t\in\{0,1\}\\2s+3t\leq h}}f_{s,t}^{(h,r)}(\tau)\wp(z)^s(\wp'(z))^t.$$
\end{prop}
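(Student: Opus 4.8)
The plan is to combine the two preceding lemmas to write $\wp_{\{h\}^r}$ as a polynomial in $\wp$ and $\wp'$ with quasi-modular coefficients, and then to pin down the range of exponents using the pole order of the multiple $\wp$-function. First I would start from the expansion in \Cref{lem:multi-to-deri}, which presents $\wp_{\{h\}^r}(z)$ as a $\Q$-linear combination of products $\prod_{k=1}^{r}\bigl(\wp_2^{(hk-2)}(z)\bigr)^{m_k}$ over partitions $(1^{m_1},\dots,r^{m_r})\vdash r$. Each factor $\wp_2^{(hk-2)}$ I would treat according to the parity of $hk$: since $\wp_2^{(n)}=\wp^{(n)}$ for $n\geq 1$, \Cref{lem:deri-to-poly} shows that if $hk$ is even then $\wp_2^{(hk-2)}$ is a polynomial in $\wp$ with modular coefficients, whereas if $hk$ is odd then $\wp_2^{(hk-2)}$ is the derivative of such a polynomial, hence equal to $\wp'$ times a polynomial in $\wp$. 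The only place where $G_2$, rather than a genuine modular form, can enter is the factor with $hk-2=0$, which occurs precisely when $h=2$; this is the source of the quasi-modularity (for $h\geq 3$ the coefficients are in fact honest modular forms).

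Next I would reduce the powers of $\wp'$ using the differential equation $(\wp')^2=4\wp^3-60G_4\wp-140G_6$, which turns every even power of $\wp'$ into a polynomial in $\wp$ and leaves at most a single surviving factor of $\wp'$. Expanding the products from the first step and applying this reduction yields an expression of exactly the asserted shape $\sum_{s\geq 0,\ t\in\{0,1\}}f_{s,t}^{(h,r)}(\tau)\,\wp(z)^s\wp'(z)^t$, in which each coefficient lies in $\Q[G_2,G_4,G_6]$ and is therefore a quasi-modular form. It then remains to establish the range of summation and the weight.

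For the range, I would invoke the fact recorded in the introduction that $\wp_{\{h\}^r}$ is elliptic with poles only on $L_\tau$, of order at most $h$. The key point is uniqueness: the monomials $\wp^s(\wp')^t$ with $s\geq 0$ and $t\in\{0,1\}$ have pairwise distinct pole orders $2s+3t$ at the origin (even values for $t=0$, odd values $\geq 3$ for $t=1$), so they are $\C$-linearly independent and the representation above is unique. Since $\wp^s(\wp')^t$ contributes a pole of order exactly $2s+3t$ while the total pole order is at most $h$, every coefficient with $2s+3t>h$ must vanish, giving the constraint $2s+3t\leq h$. I expect this to be the main obstacle, because the individual terms produced by \Cref{lem:multi-to-deri} have pole orders as large as $hr$; the bound $2s+3t\leq h$ is therefore invisible term by term and emerges only after massive cancellation. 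This is precisely why I would import the a priori pole bound and combine it with uniqueness, rather than attempt to track the cancellation through the partition combinatorics.

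Finally, for the weight I would argue by homogeneity under rescaling of the lattice. Viewed as functions of a lattice $L$, one has $\wp(\lambda z;\lambda L)=\lambda^{-2}\wp(z;L)$, $\wp'(\lambda z;\lambda L)=\lambda^{-3}\wp'(z;L)$, and $\wp_{\{h\}^r}(\lambda z;\lambda L)=\lambda^{-hr}\wp_{\{h\}^r}(z;L)$, while a quasi-modular form of weight $w$ scales by $\lambda^{-w}$. Substituting into the unique representation and comparing powers of $\lambda$ forces $f_{s,t}^{(h,r)}$ to be homogeneous of weight $hr-2s-3t$, which completes the proof.
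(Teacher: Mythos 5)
Your proposal is correct and follows essentially the same route as the paper: combine \Cref{lem:multi-to-deri} with \Cref{lem:deri-to-poly} to obtain a polynomial in $\wp$ and $\wp'$ with quasi-modular coefficients, then use the a priori bound that $\wp_{\{h\}^r}$ has poles of order at most $h$ to force $2s+3t\leq h$. The paper's proof is just a compressed version of this; your added details (handling odd-order derivatives via $\wp'$ times a polynomial in $\wp$, reducing $(\wp')^2$ by the differential equation, uniqueness via distinct pole orders, and the weight count via lattice homogeneity) are exactly the steps the paper leaves implicit.
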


\begin{proof}
By \cref{lem:multi-to-deri,lem:deri-to-poly}, $\wp_{\{h\}^r}(z)$ can be expressed as a polynomial in $\wp(z)$ and $\wp'(z)$ whose coefficients are quasi-modular forms. Furthermore, $\wp_{\{h\}^r}(z)$ has poles of order at most $h$ only at the lattice points. Thus, $2s+3t\leq h$ follows.
\end{proof}

\begin{thm}\label{thm:genelating-function}
For any integer $h\geq 2$, we have
\begin{align}
\sum_{r\ge0} (-1)^r \wp_{\{h\}^r}(z) Y^{hr}
= \sigma(z)^{-h}\prod_{j=0}^{h-1}\sigma(z-\mu_{h}^j Y),
\end{align}
where $\mu_h=e^{2\pi i/h}$ and $\wp_{\{h\}^0}(z)=1$.
\end{thm}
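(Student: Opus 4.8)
The plan is to treat the first equality as already in hand: it is precisely the computation performed in the proof of \cref{lem:multi-to-deri}, where the harmonic product of the multiple $\wp$-functions together with the Hoffman--Ihara exponential formula gives
\begin{equation*}
\sum_{r\ge0}(-1)^r\wp_{\{h\}^r}(z)Y^{hr}=\exp\left(\sum_{k\ge1}\frac{(-1)^{hk-1}h}{(hk)!}\wp_2^{(hk-2)}(z)Y^{hk}\right).
\end{equation*}
So the genuinely new content is the second equality, and I would prove it by taking logarithms and matching Taylor coefficients in $Y$. Fix $z\notin L_\tau$, so that $\sigma(z)\neq0$ and the right-hand side is analytic (indeed entire) in $Y$ near $0$. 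Using $\mathcal{P}=\log\sigma$ from \cref{eq:def-sigma}, the logarithm of the right-hand side is the power series in $Y$
\begin{equation*}
-h\,\mathcal{P}(z)+\sum_{j=0}^{h-1}\mathcal{P}(z-\mu_h^j Y).
\end{equation*}

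Next I would expand each summand about $Y=0$, writing $\mathcal{P}(z-\mu_h^j Y)=\sum_{m\ge0}\frac{(-1)^m\mu_h^{jm}}{m!}\mathcal{P}^{(m)}(z)Y^m$, and interchange the sums over $j$ and $m$. The key mechanism is the root-of-unity filter: the inner sum $\sum_{j=0}^{h-1}\mu_h^{jm}$ equals $h$ when $h\mid m$ and vanishes otherwise, since $(\mu_h^m)^h=1$. Hence only the indices $m=hk$ survive, yielding
\begin{equation*}
\sum_{j=0}^{h-1}\mathcal{P}(z-\mu_h^j Y)=\sum_{k\ge0}\frac{(-1)^{hk}h}{(hk)!}\mathcal{P}^{(hk)}(z)Y^{hk}.
\end{equation*}

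To finish, I would observe that the $k=0$ term here is exactly $h\,\mathcal{P}(z)$, which cancels the $-h\,\mathcal{P}(z)$. For $k\ge1$ we have $hk\ge2$, so the relation $\mathcal{P}''=-\wp_2$ gives $\mathcal{P}^{(hk)}(z)=-\wp_2^{(hk-2)}(z)$; substituting and using $(-1)^{hk}\cdot(-1)=(-1)^{hk-1}$ turns the surviving series into $\sum_{k\ge1}\frac{(-1)^{hk-1}h}{(hk)!}\wp_2^{(hk-2)}(z)Y^{hk}$, which is precisely the exponent in the middle expression. Exponentiating then gives the claimed identity. I do not expect a serious obstacle in the calculation itself; the only point requiring care is that $\mathcal{P}=\log\sigma$ is multivalued, so one must confirm the manipulations are legitimate as an identity of power series in $Y$. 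This is handled by noting that every coefficient of a positive power of $Y$ involves only the single-valued derivatives $\mathcal{P}^{(m)}(z)=-\wp_2^{(m-2)}(z)$ with $m\ge2$, while the constant term in $Y$ vanishes; thus the branch ambiguity drops out entirely and exponentiation recovers the single-valued meromorphic function $\sigma(z)^{-h}\prod_{j=0}^{h-1}\sigma(z-\mu_h^j Y)$.
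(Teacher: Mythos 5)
Your proposal is correct and matches the paper's own proof in essence: both cite the computation in the proof of \cref{lem:multi-to-deri} for the first equality and then connect the exponent to $-h\mathcal{P}(z)+\sum_{j=0}^{h-1}\mathcal{P}(z-\mu_h^jY)$ via $\mathcal{P}^{(hk)}=-\wp_2^{(hk-2)}$ and the root-of-unity filter $\sum_{j=0}^{h-1}\mu_h^{jm}=h$ or $0$, the only cosmetic difference being that you run the computation from the $\sigma$-product side by taking logarithms while the paper runs it from the exponential side. Your extra remark on the multivaluedness of $\mathcal{P}=\log\sigma$ is a legitimate point of care that the paper leaves implicit, and it is resolved exactly as you say.
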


\begin{proof}
By \cref{lem:multi-to-deri}, we have
\begin{align}
\sum_{r\geq0}(-1)^r\wp_{\{h\}^r}(z)Y^{hr}
= \exp\left(\sum_{k=1}^{\infty}\frac{(-1)^{hk-1}h}{(hk)!}\wp_2^{(hk-2)}(z)Y^{hk}\right).
\end{align}
By \eqref{eq:mathcalP}, we have
\begin{align}
\exp\left(\sum_{k=1}^{\infty}\frac{(-1)^{hk-1}h}{(hk)!}\wp_2^{(hk-2)}(z)Y^{hk}\right)
&= \exp\left(-h\mathcal{S}(z)
+\sum_{k=0}^{\infty}\frac{(-1)^{hk}h}{(hk)!}\mathcal{S}^{(hk)}(z)Y^{hk}\right).
\end{align}
Using the Taylor expansion $\mathcal{S}(z-\mu_h^jY)=\sum_{k\ge0}\frac{(-1)^k\mathcal{S}^{(k)}(z)}{k!}\mu_h^{jk}Y^k$, we have
\begin{align}
    \sum_{j=0}^{h-1}\mathcal{S}(z-\mu_h^jY)&=\sum_{k=0}^\infty\frac{(-1)^k\mathcal{S}^{(k)}(z)}{k!}\left(\sum_{j=0}^{h-1}\mu_h^{jk}\right)Y^k\\
    &=\sum_{k=0}^\infty\frac{(-1)^{hk}h}{(hk)!}\mathcal{S}^{(hk)}(z)Y^{hk}
\end{align}
Therefore, we have
\begin{align}
    \exp\left(-h\mathcal{S}(z)+\sum_{k=0}^{\infty}\frac{(-1)^{hk}h}{(hk)!}\mathcal{S}^{(hk)}(z)Y^{hk}\right)
    &= \exp\left(-h\mathcal{S}(z)+\sum_{j=0}^{h-1}\mathcal{S}(z-\mu_h^jY)\right)\\
    &= \sigma(z)^{-h}\prod_{j=0}^{h-1}\sigma(z-\mu_{h}^j Y).
\end{align}
\end{proof}

Now, we review the notion of partition traces. For a partition $\lambda=(1^{m_1},\dots,r^{m_r})\vdash r$, we set
$$X_\lambda\ceq\prod_{j=1}^{r}X_j^{m_j}$$
and define a partition trace of $\phi:\{\lambda\mid\lambda\vdash r,r\geq 1\}\to\mathbb{C}$ by
$$\Tr_r(\phi;X_1,\dots,X_r) \ceq \sum_{\lambda\vdash r}\phi(\lambda)X_\lambda,$$
and define $\Tr_0(\phi;-)\ceq 1$ by convention. 
Partition traces for general partition functions and variables are defined in \cite{Mat}. A particularly interesting case arises when each $X_j$ is specialized to the Eisenstein series $G_{2j}$, which is called \emph{partition Eisenstein traces}, and studied in several recent papers (\cite{AGOS,AGO,AOS,Mat}). See \cite{Mat} for more historical details.

When expressing $\wp_{\{h\}^r}$ as a linear combination of $\wp_k$ ($2\leq k\leq h$), the coefficient of $\wp_h$ can be written in terms of a partition trace.

\begin{prop}\label{thm:max-coeff-wp}
For a partition $\lambda=(1^{m_1},\dots,r^{m_r})\vdash r$, we set
$$\beta(\lambda) \ceq \prod_{k=1}^{r}\frac{1}{m_k!}\left(\frac{(-1)^{k-1}}{k}\right)^{m_k}.$$
Then, for any integers $r\geq 1$ and $h\geq 2$, we have
\begin{align}
\lim_{z\to 0}\frac{\wp_{\{h\}^r}(z;\tau)}{\wp_h(z;\tau)}
&=\Tr_{r-1}(\beta;G_h(\tau),G_{2h}(\tau),\dots,G_{h(r-1)}(\tau))\\
&= (-1)^{r-1}\sum_{\substack{\lambda\vdash r-1\\ \lambda=(1^{m_1},\dots,(r-1)^{m_{r-1}})}}\prod_{k=1}^{r-1}\frac{1}{m_k!}
\left(-\frac{G_{hk}}{k}\right)^{m_k}.
\end{align}
\end{prop}

\begin{proof}
Because $\wp_h(z)\sigma(z)^h\to 1$ as $z\to 0$, by taking the limit $z\to 0$ yields
\begin{equation}\label{eq:max-coeff-sigam}
\lim_{z\to 0}\sum_{r=0}^{\infty} (-1)^r \frac{\wp_{\{h\}^r}(z;\tau)}{\wp_h(z;\tau)} Y^{hr}
= \lim_{z\to 0}\frac{\prod_{j=0}^{h-1}\sigma(z-\mu_{h}^j Y)}{\wp_h(z)\sigma(z)^h}
= (-1)^h \prod_{j=0}^{h-1}\sigma(\mu_{h}^j Y).
\end{equation}
On the other hand, by \cref{lem:3.1}, we have
\begin{align}
\prod_{j=0}^{h-1}\sigma(\mu_{h}^j Y)
&=\prod_{j=0}^{h-1}(\mu_{h}^j Y)\exp\left(-\sum_{n=0}^{\infty}\frac{G_{n+2}}{n+2}(\mu_{h}^j Y)^{n+2}\right)
\\
&=(-1)^{h-1}Y^h\exp\left(-h\sum_{n=1}^{\infty}\frac{G_{hn}}{hn}Y^{hn}\right)
\\
&=(-1)^{h-1}Y^h\left(\sum_{r=0}^{\infty}\sum_{\lambda\vdash r}\prod_{k=1}^{r}\frac{1}{m_k!}
\left(-\frac{G_{hk}}{k}\right)^{m_k} Y^{hr}\right)
\\
&=(-1)^{h-1}\sum_{r=1}^{\infty}\sum_{\lambda\vdash r-1}\prod_{k=1}^{r-1}\frac{1}{m_k!}
\left(-\frac{G_{hk}}{k}\right)^{m_k} Y^{hr}.
\end{align}
Therefore, comparing the coefficients of $Y^{hr}$, we have the statement.
\end{proof}

In the case of $h=2$, the constant term can also be expressed using partition traces.

\begin{thm}\label{thm:wp-coeff-two-index} For any integer $r\geq 1$, let
\begin{align}
f_r(\tau) &= \Tr_{r-1}(\beta;G_2(\tau),G_4(\tau),\dots,G_{2(r-1)}(\tau)),\label{eq:f_r-explicit}
\\
g_r(\tau) &= \sum_{t=0}^{r} (-1)^{r-t}(2(r-t)-1)G_{2(r-t)}(\tau)f_{t+1}(\tau),\label{eq:g_r-explicit}
\end{align}
and $G_{0}(\tau)=-1$.
Then, we have
$$\wp_{\{2\}^r}(z;\tau) = f_r(\tau)\wp_2(z;\tau) + g_r(\tau).$$
\end{thm}

\begin{proof}
By \Cref{thm:wp-form-two}, we can write $\wp_{\{2\}^r}(z;\tau)=\widehat{f}_r(\tau)\wp_2(z;\tau)+\widehat{g}_r(\tau)$ for some $\widehat{f}_r$ and $\widehat{g}_r$. \Cref{thm:max-coeff-wp} yields that $\widehat{f}_r=f_r$. We now prove $\widehat{g}_r=g_r$. From \cref{thm:wp-form-two,eq:max-coeff-sigam}, the series
$$\sum_{r=0}^{\infty}(-1)^r \widehat{g}_r(\tau)Y^{2r}
= \sum_{r=0}^{\infty}(-1)^r (\wp_{\{2\}^r}(z)-f_r(\tau)\wp_2(z))Y^{2r}
= \frac{\sigma(z+Y)\sigma(z-Y)}{\sigma(z)^2}
+ \wp_2(z)\sigma(Y)^2$$
is independent of $z$ since $\widehat{g}(\tau)=\wp_{\{2\}^r}(z)-f_r(\tau)\wp_2(z)$ is independent of $z$. Hence, we can substitute $z=Y$ and then obtain
\begin{align}
\sum_{r=0}^{\infty}(-1)^r \widehat{g}_r(\tau)Y^{2r}
= \frac{\sigma(z+Y)\sigma(z-Y)}{\sigma(z)^2} + \wp_2(z)\sigma(Y)^2
= \wp_2(Y)\sigma(Y)^2
\end{align}
since $\sigma(0)=0$. On the other hand, we have
\begin{align}
\wp_2(Y)\sigma(Y)^2
&= \left(\frac{1}{Y^2}+\sum_{n=0}^{\infty}(2n+1)G_{2n+2}(\tau)Y^{2n}\right)
\left(\sum_{r=0}^{\infty}(-1)^{r}f_{r+1}(\tau)Y^{2r+2}\right)
\\
&= \sum_{r=0}^{\infty}(-1)^{r}f_{r+1}(\tau)Y^{2r}
+ \sum_{r=0}\sum_{k=0}^{r}(-1)^k f_{k+1}(\tau)
(2(r-k)+1)G_{2(r-k)+2}(\tau) Y^{2r+2}.
\end{align}
Therefore, comparing the coefficients of $Y^{2r}$, we obtain the result.
\end{proof}

Remark that the well-known formula 
\begin{equation}\label{eq:famous-eq}
\wp_2(y)-\wp_2(z)=\frac{\sigma(z+y)\sigma(z-y)}{\sigma(y)^2\sigma(z)^2}
\end{equation}
from the theory of elliptic functions arises in the course of the above proof. A similar argument can be applied to the case of a general repeated index $\{h\}^r$, yielding an identity involving the $\sigma$-function for each $h$. We now turn to the case $h=3$. For a partition $\lambda=(1^{m_1},\dots,r^{m_r})\vdash r$, we set $\ell(\lambda)\ceq m_1+\cdots+m_r$ and define
$$\beta'(\lambda) \ceq \frac{1}{2^{\ell(\lambda)}}\prod_{k=1}^{r}\frac{1}{m_k!}\frac{1}{k^{m_k}}.$$

\begin{prop}\label{thm:multi-wp-trip} For any positive odd integer $r$, we have
\begin{align}
\wp_{\{3\}^r}(z;\tau)
= -\Tr_{(r-1)/2}(\beta';G_6(\tau),G_{12}(\tau),G_{18}(\tau),\dots,G_{3(r-1)}(\tau))\cdot\wp_3(z;\tau).
\end{align}
\end{prop}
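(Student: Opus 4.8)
The plan is to imitate the generating-function argument used for \cref{thm:wp-coeff-two-index}, now specialized to $h=3$. By \cref{thm:genelating-function}, the generating series $\sum_{r\ge0}(-1)^r\wp_{\{3\}^r}(z)Y^{3r}$ equals $\sigma(z)^{-3}\prod_{j=0}^{2}\sigma(z-\mu_3^j Y)$, where $\mu_3=e^{2\pi i/3}$. The key structural input is that for odd $h$ the symmetry $\wp_{k_1,\dots,k_r}(-z)=(-1)^{k_1+\cdots+k_r}\wp_{k_r,\dots,k_1}(z)$ forces $\wp_{\{3\}^r}(-z)=(-1)^{3r}\wp_{\{3\}^r}(z)=-\wp_{\{3\}^r}(z)$ for odd $r$, so $\wp_{\{3\}^r}$ is an \emph{odd} elliptic function with a pole of order at most $3$ at the lattice points. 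By \cref{thm:wp-form-two} the only odd building block available in weight-type $\wp(z)^s(\wp'(z))^t$ with $2s+3t\le 3$ and genuinely odd parity is $\wp'(z)=-\wp_3(z)$ (the term $t=1,s=0$); the even powers of $\wp$ cannot appear. Hence for odd $r$ one automatically gets $\wp_{\{3\}^r}(z)=c_r(\tau)\wp_3(z)$ for a single scalar $c_r(\tau)$, and the whole problem reduces to computing the ratio $c_r=\lim_{z\to0}\wp_{\{3\}^r}(z)/\wp_3(z)$.

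First I would record this parity reduction carefully, so that the claimed formula is an identity of scalars rather than of functions. Then the coefficient $c_r$ is exactly the quantity computed in \cref{thm:max-coeff-wp} with $h=3$: one has
\begin{equation*}
c_r=\lim_{z\to0}\frac{\wp_{\{3\}^r}(z;\tau)}{\wp_3(z;\tau)}
=(-1)^{3}\sum_{\substack{\lambda\vdash r-1\\ \lambda=(1^{m_1},\dots,(r-1)^{m_{r-1}})}}\prod_{k=1}^{r-1}\frac{1}{m_k!}\left(\frac{G_{3k}}{k}\right)^{m_k}.
\end{equation*}
Since $G_{3k}$ vanishes whenever $3k$ is odd, i.e. whenever $k$ is odd, only partitions of $r-1$ into \emph{even} parts survive. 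Writing each even part $2k'$ and collecting, the surviving sum runs over partitions $\lambda'\vdash(r-1)/2$ (which is an integer precisely because $r$ is odd), and each factor $G_{3\cdot 2k'}/(2k')=G_{6k'}/(2k')$ contributes the extra $1/2$ per part that is encoded in $2^{-\ell(\lambda')}$ in the definition of $\beta'$.

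The remaining step is the bookkeeping that turns the even-parts-only partition sum over $r-1$ into the partition-trace $\Tr_{(r-1)/2}(\beta';G_6,G_{12},\dots,G_{3(r-1)})$. Concretely, a partition of $r-1$ using only even parts is the same datum as a partition $\mu=(1^{m_1},\dots)\vdash(r-1)/2$ via halving each part; under this bijection $m_{k}$ (the multiplicity of the part $2k$ in the old partition) becomes the multiplicity of the part $k$ in $\mu$, the factor $1/m_k!$ matches, $1/(2k)^{m_k}=2^{-m_k}k^{-m_k}$ supplies the $2^{-\ell(\mu)}$ and the $k^{-m_k}$ of $\beta'$, and $G_{6k}$ is the $k$-th argument $G_{6\cdot 1},G_{6\cdot 2},\dots$. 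Matching these against $\beta'(\mu)=2^{-\ell(\mu)}\prod_k \frac{1}{m_k!}\frac{1}{k^{m_k}}$ and $X_k=G_{6k}$ gives exactly the asserted trace, with the overall sign $(-1)^3=-1$ producing the leading minus. I expect the only real obstacle to be this last combinatorial relabeling: one must verify that halving the even parts is a bijection onto all partitions of $(r-1)/2$ and that every factor ($1/m_k!$, the powers of $k$, the powers of $2$, and the Eisenstein arguments) transports correctly, taking care that odd parts contribute nothing because $G_{\text{odd}}=0$. Everything else is a direct substitution into \cref{thm:max-coeff-wp} and \cref{thm:wp-form-two}.
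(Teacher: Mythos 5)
Your proposal is correct and takes essentially the same route as the paper, whose proof is exactly this: oddness of $\wp_{\{3\}^r}$ for odd $r$ combined with \cref{thm:wp-form-two} forces $\wp_{\{3\}^r}(z)=c_r(\tau)\wp_3(z)$, and \cref{thm:max-coeff-wp} with $h=3$ gives $c_r$, your even-parts-halving bijection being precisely the bookkeeping the paper leaves implicit in the definition of $\beta'$ (the factor $2^{-\ell(\lambda)}$). One harmless slip: $\wp'(z)=-2\wp_3(z)$, not $-\wp_3(z)$, but this does not affect your argument since you compute $c_r$ directly as the limit of the ratio $\wp_{\{3\}^r}(z)/\wp_3(z)$.
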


\begin{proof} Since $\wp_{\{3\}^r}(z)$ is an odd function, we obtain the result from \cref{thm:wp-form-two} and \cref{thm:max-coeff-wp}.
\end{proof}

\begin{cor} We have
\begin{align}
\sigma(2z)\prod_{j=0}^{2}\sigma(\mu_3^jY)
&=\sigma(z)\prod_{j=0}^{2}\sigma(z+\mu_3^jY)
-\sigma(z)\prod_{j=0}^{2}\sigma(z-\mu_3^jY).
\end{align}
\end{cor}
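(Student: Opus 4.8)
The plan is to mimic the $h=2$ argument that produced \eqref{eq:famous-eq}, but now antisymmetrizing the generating function in $Y$ so that the reduction of \Cref{thm:multi-wp-trip} can be exploited. Write $F(z,Y)\ceq\sum_{r\ge0}(-1)^r\wp_{\{3\}^r}(z)Y^{3r}$, which by \Cref{thm:genelating-function} with $h=3$ equals $\sigma(z)^{-3}\prod_{j=0}^{2}\sigma(z-\mu_3^jY)$. Since $(-Y)^{3r}=(-1)^rY^{3r}$ and $\mu_3^j(-Y)=-\mu_3^jY$, replacing $Y$ by $-Y$ gives $F(z,-Y)=\sum_{r\ge0}\wp_{\{3\}^r}(z)Y^{3r}=\sigma(z)^{-3}\prod_{j=0}^{2}\sigma(z+\mu_3^jY)$. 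Subtracting, the even-$r$ terms cancel and I obtain
\[
F(z,Y)-F(z,-Y)=-2\sum_{r\text{ odd}}\wp_{\{3\}^r}(z)Y^{3r}=\sigma(z)^{-3}\Bigl(\prod_{j=0}^{2}\sigma(z-\mu_3^jY)-\prod_{j=0}^{2}\sigma(z+\mu_3^jY)\Bigr).
\]

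The crucial structural input is \Cref{thm:multi-wp-trip}: for every odd $r$ we have $\wp_{\{3\}^r}(z)=-c_r\,\wp_3(z)$ with $c_r$ depending only on $\tau$. Hence the left-hand side factors as $\wp_3(z)\,\Phi(Y)$, where $\Phi(Y)\ceq 2\sum_{r\text{ odd}}c_rY^{3r}$ is a series in $Y$ alone; this separation of variables is what makes the identity fall out. Since $F(z,Y)-F(z,-Y)=\wp_3(z)\Phi(Y)$ identically in $z$, I can compute $\Phi(Y)=\lim_{z\to0}(F(z,Y)-F(z,-Y))/\wp_3(z)$ on the $\sigma$-side: the middle limit in \eqref{eq:max-coeff-sigam} (with $h=3$) gives $\lim_{z\to0}F(z,Y)/\wp_3(z)=-\prod_{j=0}^{2}\sigma(\mu_3^jY)$, and replacing $Y$ by $-Y$ together with the oddness of $\sigma$ gives $\lim_{z\to0}F(z,-Y)/\wp_3(z)=\prod_{j=0}^{2}\sigma(\mu_3^jY)$. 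Thus $\Phi(Y)=-2\prod_{j=0}^{2}\sigma(\mu_3^jY)$, and multiplying the displayed identity through by $\sigma(z)^3$ yields
\[
\prod_{j=0}^{2}\sigma(z-\mu_3^jY)-\prod_{j=0}^{2}\sigma(z+\mu_3^jY)=-2\wp_3(z)\,\sigma(z)^3\prod_{j=0}^{2}\sigma(\mu_3^jY).
\]

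It remains to rewrite the prefactor $-2\wp_3(z)\sigma(z)^3$. From $\wp_3=\frac{(-1)^3}{2!}\wp'=-\tfrac12\wp'$ one gets $-2\wp_3(z)=\wp'(z)$, so the prefactor is $\wp'(z)\sigma(z)^3$. I would then invoke the duplication law $\sigma(2z)=-\wp'(z)\sigma(z)^4$; this is classical, but it is also available internally, since letting $y\to z$ in \eqref{eq:famous-eq} and using $\wp_2'=\wp'$ and $\sigma(z-y)\sim-(y-z)$ gives exactly $\wp'(z)=-\sigma(2z)/\sigma(z)^4$. Then $\wp'(z)\sigma(z)^3=-\sigma(2z)/\sigma(z)$, and multiplying both sides by $-\sigma(z)$ produces precisely the claimed identity. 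The proof is essentially bookkeeping once \Cref{thm:multi-wp-trip} supplies the variable separation; accordingly, the only genuinely delicate points are tracking the signs and powers of $-1$ (the factors $(-Y)^{3r}$, the oddness of $\sigma$, and the $(-1)^h$ in \eqref{eq:max-coeff-sigam}) and securing the duplication formula, which I expect to be the main thing to pin down carefully.
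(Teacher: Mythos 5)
Your proof is correct and takes essentially the same route as the paper: both isolate the odd-$r$ part of the $h=3$ generating function from \cref{thm:genelating-function}, use \cref{thm:multi-wp-trip} together with \cref{eq:max-coeff-sigam} to identify the resulting $Y$-series as $-2\prod_{j=0}^{2}\sigma(\mu_3^jY)$, and conclude via the duplication formula $\wp'(z)=-\sigma(2z)/\sigma(z)^4$ derived from \cref{eq:famous-eq}, exactly as in \cref{eq:p-3-sigma-2-4}. The only difference is organizational—you extract the coefficient series by the limit $z\to 0$ and a separation-of-variables remark, while the paper writes the same content as the identity ``$0=2\sum_{r\,\text{odd}}(-1)^r(\wp_{\{3\}^r}(z)-f_r(\tau)\wp_3(z))Y^{3r}$''—and your sign bookkeeping checks out throughout.
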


\begin{proof} From \cref{thm:multi-wp-trip,eq:max-coeff-sigam}, we have
\begin{align}
0&= 2\sum_{r=0:\text{odd}}^{\infty}(-1)^r (\wp_{\{3\}^r}(z)-f_r(\tau)\wp_3(z))Y^{hr}
\\
&= \sigma(z)^{-3}\prod_{j=0}^{2}\sigma(z-\mu_3^jY)
+ \wp_3(z)\prod_{j=0}^{2}\sigma(\mu_3^jY)
-\sigma(z)^{-3}\prod_{j=0}^{2}\sigma(z+\mu_3^jY)
-\wp_3(z)\prod_{j=0}^{2}\sigma(-\mu_3^jY),
\end{align}
where $f_r(\tau)$ is the modular form such that $\wp_{\{3\}^r}(z;\tau)=f_r(\tau)\wp_3(z;\tau)$. Here, from \cref{eq:famous-eq}, we have
\begin{equation}\label{eq:p-3-sigma-2-4}
-2\wp_3(y)=\wp(y)'=\lim_{z\to y}\frac{\wp_2(y)-\wp_2(z)}{y-z}
=\lim_{z\to y}\frac{1}{y-z}\frac{\sigma(z+y)\sigma(z-y)}{\sigma(y)^2\sigma(z)^2}
=-\frac{\sigma(2y)}{\sigma(y)^4}.
\end{equation}
Therefore, we have
\begin{align}
\sigma(2z)\prod_{j=0}^{2}\sigma(\mu_3^jY)
&=\sigma(z)\prod_{j=0}^{2}\sigma(z+\mu_3^jY)
-\sigma(z)\prod_{j=0}^{2}\sigma(z-\mu_3^jY).
\end{align}
\end{proof}

\begin{prop} We have
\begin{align}
&2\sigma(z)\sigma(Y)\sigma(z+Y)\sigma(z-Y)\sum_{i=0}^{2}\zeta(\mu_3^iY)\prod_{j=0}^2\sigma(\mu_3^jY)
\\
&\quad\quad = \sigma(z)^{3}\prod_{j=0}^2\sigma(Y+\mu_3^jY)
-\sigma(Y)^{3}\prod_{j=0}^2\sigma(z-\mu_3^jY)
-\sigma(Y)^{3}\prod_{j=0}^2\sigma(z+\mu_3^jY).
\end{align}
\end{prop}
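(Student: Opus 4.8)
The plan is to isolate the even-in-$r$ part of the generating function of \Cref{thm:genelating-function} at $h=3$ and to recognise it as a degree-one polynomial in $\wp(z)$ whose coefficients are exactly the $Y$-functions appearing in the statement. Write $P=\prod_{j=0}^{2}\sigma(\mu_3^jY)$, $Q_{\pm}=\prod_{j=0}^{2}\sigma(z\pm\mu_3^jY)$ and $S=\sum_{i=0}^{2}\zeta(\mu_3^iY)$. Replacing $Y$ by $-Y$ in \Cref{thm:genelating-function} and averaging, and using $(-Y)^{3r}=(-1)^rY^{3r}$, gives
\begin{equation*}
\tfrac12\sigma(z)^{-3}(Q_-+Q_+)=\sum_{r\ \text{even}}\wp_{\{3\}^r}(z)Y^{3r}.
\end{equation*}
By \Cref{thm:wp-form-two} at $h=3$ each even-depth term is even, so only the $(s,t)=(0,0),(1,0)$ contributions survive; equivalently, each $\sigma(z)^{-3}Q_{\pm}$ is elliptic in $z$ (the quasi-period factors cancel because $\sum_j\mu_3^j=0$) and their sum is an even elliptic function with poles of order at most $2$ on $L_\tau$. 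In either case the right-hand side has the form $\wp(z)A(Y)+B(Y)$, and it suffices to determine $A$ and $B$.

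First I would read off $A(Y)$ from the double pole at $z=0$. Logarithmic differentiation gives $Q_+(z,Y)=P\bigl(1+Sz+O(z^2)\bigr)$, while $Q_-(0,Y)=-P$ and $\partial_z\log Q_-\vert_{z=0}=-S$ yield $Q_-(z,Y)=-P\bigl(1-Sz+O(z^2)\bigr)$. Hence $Q_-+Q_+=2PS\,z+O(z^2)$ and $\tfrac12\sigma(z)^{-3}(Q_-+Q_+)\sim PS\,z^{-2}$, which compared with $\wp(z)A(Y)\sim A(Y)z^{-2}$ gives $A(Y)=PS$. This is the step where the symmetric sum $S=\sum_i\zeta(\mu_3^iY)$ enters, and it is the main point to get right: the Weierstrass $\zeta$-values are precisely the first-order coefficients of the $z$-expansions of $Q_{\pm}$, and one must use $\sum_i\mu_3^{-i}=0$ to see that the simple-pole (order $z^{-3}$) contribution cancels so that $A$ is read off cleanly from the $z^{-2}$ term.

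Next I would pin down $B(Y)$ by evaluating the identity $\tfrac12\sigma(z)^{-3}(Q_-+Q_+)=\wp(z)PS+B(Y)$ at $z=Y$. There $Q_-(Y,Y)=0$ because of the vanishing factor $\sigma(Y-Y)$, while $Q_+(Y,Y)=\prod_j\sigma(Y+\mu_3^jY)=\tfrac{\sigma(2Y)}{\sigma(Y)}P$. Using $\sigma(2Y)/\sigma(Y)^4=2\wp_3(Y)$ from \cref{eq:p-3-sigma-2-4}, the left-hand side collapses to $\wp_3(Y)P$, so that $B(Y)=P\bigl(\wp_3(Y)-\wp(Y)S\bigr)$.

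Finally, substituting $A$ and $B$ back yields
\begin{equation*}
\tfrac12(Q_-+Q_+)=\sigma(z)^3P\bigl(\wp_3(Y)-(\wp(Y)-\wp(z))S\bigr),
\end{equation*}
and the stated identity follows by routine rearrangement. Moving $\sigma(Y)^3(Q_-+Q_+)$ to the left and writing $\sigma(z)^3\prod_j\sigma(Y+\mu_3^jY)=\sigma(z)^3\tfrac{\sigma(2Y)}{\sigma(Y)}P=2\wp_3(Y)\sigma(Y)^3\sigma(z)^3P$ cancels the two $\wp_3(Y)$-terms, while \cref{eq:famous-eq} in the form $\sigma(z+Y)\sigma(z-Y)=\sigma(z)^2\sigma(Y)^2(\wp(Y)-\wp(z))$ turns the surviving $(\wp(Y)-\wp(z))S$-term into the product $2\sigma(z)\sigma(Y)\sigma(z+Y)\sigma(z-Y)SP$ on the left. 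The only genuinely delicate point is the extraction of $A=PS$ and the correct appearance of $S$ there; everything afterward is bookkeeping with the two $\sigma$-function identities already recorded in the text.
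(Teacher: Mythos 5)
Your proof is correct and follows essentially the same route as the paper's: both extract the even-in-$r$ part of the $h=3$ generating function of \cref{thm:genelating-function}, identify the coefficient of $\wp(z)$ as $\bigl(\sum_{i}\zeta(\mu_3^iY)\bigr)\prod_{j}\sigma(\mu_3^jY)$ from the expansion at $z=0$, exploit the $z$-independence of the remainder by evaluating at $z=Y$ (where the factor $\sigma(Y-Y)$ kills $Q_-$), and finish with \cref{eq:famous-eq}. The only cosmetic difference is that you simplify $\prod_{j}\sigma(Y+\mu_3^jY)$ to $2\wp_3(Y)\sigma(Y)^3\prod_{j}\sigma(\mu_3^jY)$ via $1+\mu_3=-\mu_3^2$ and \cref{eq:p-3-sigma-2-4}, introducing a $\wp_3(Y)$ term that subsequently cancels, whereas the paper leaves that product untouched.
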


\begin{proof} For even $r\geq 0$, since $\wp_{\{3\}^r}(z)$ is an even function, and from \cref{thm:wp-form-two}, there exist quasi-modular forms $f_r(\tau)$ and $g_r(\tau)$ such that
\begin{align}
\wp_{\{3\}^r}(z;\tau) = f_r(z)\wp_2(z)+g_r(\tau).
\end{align}
By \cref{thm:genelating-function}, we have
\begin{align}
2\sum_{r=0:\text{even}}^{\infty}(-1)^r f_r(\tau)Y^{3r}
&= \lim_{z\to 0}\frac{1}{\wp_2(z)\sigma(z)^{3}}
\left(\prod_{j=0}^2\sigma(z-\mu_3^jY)+\prod_{j=0}^2\sigma(z+\mu_3^jY)\right)
\\
&= \frac{d}{dz}\left.\left(\prod_{j=0}^2\sigma(z-\mu_3^jY)+\prod_{j=0}^2\sigma(z+\mu_3^jY)\right)\right|_{z=0}
= 2\sum_{i=0}^{2}\zeta(\mu_3^iY)\prod_{j=0}^2\sigma(\mu_3^jY).
\end{align}
Therefore, the function
\begin{align}
2\sum_{r=0:\text{even}}^{\infty}&(-1)^r(\wp_{\{3\}^r}(z;\tau)-f_r(\tau)\wp_2(z;\tau))Y^{3r}
\\
&=\frac{1}{\sigma(z)^{3}}
\left(\prod_{j=0}^2\sigma(z-\mu_3^jY)+\prod_{j=0}^2\sigma(z+\mu_3^jY)\right)
-2\wp_2(z)\sum_{i=0}^{2}\zeta(\mu_3^iY)\prod_{j=0}^2\sigma(\mu_3^jY)
\end{align}
is independent of $z$. Hence, by substituting $z=Y$, we have
\begin{align}
&\frac{1}{\sigma(z)^{3}}
\left(\prod_{j=0}^2\sigma(z-\mu_3^jY)+\prod_{j=0}^2\sigma(z+\mu_3^jY)\right)
-2\wp_2(z)\sum_{i=0}^{2}\zeta(\mu_3^iY)\prod_{j=0}^2\sigma(\mu_3^jY)
\\
&\quad\quad =\frac{1}{\sigma(Y)^{3}}
\left(\prod_{j=0}^2\sigma(Y+\mu_3^jY)\right)
-2\wp_2(Y)\sum_{i=0}^{2}\zeta(\mu_3^iY)\prod_{j=0}^2\sigma(\mu_3^jY).
\end{align}
Therefore, by \cref{eq:famous-eq}, we have
\begin{align}
&2\sigma(z)\sigma(Y)\sigma(z+Y)\sigma(z-Y)\sum_{i=0}^{2}\zeta(\mu_3^iY)\prod_{j=0}^2\sigma(\mu_3^jY)
\\
&\quad\quad = \sigma(z)^{3}\prod_{j=0}^2\sigma(Y+\mu_3^jY)
-\sigma(Y)^{3}\prod_{j=0}^2\sigma(z-\mu_3^jY)
-\sigma(Y)^{3}\prod_{j=0}^2\sigma(z+\mu_3^jY).
\end{align}
\end{proof}

For the above result, Professor Toshiki Matsusaka pointed out a non-recursive explicit formula for the higher derivatives of $\wp(z)$ as a polynomial in $\wp(z)$, using the partition trace of $\phi_{\log}$, which we present below. For a partition $\lambda=(1^{m_1},\dots,r^{m_r})\vdash r$, we set $\ell(\lambda)\ceq m_1+\cdots+m_r$ and
$$\phi_{\log}(\lambda)\ceq(\ell(\lambda)-1)!\prod_{k=1}^{r}\frac{1}{m_k!}.$$
Then, we have
$$\log\left(1-\sum_{r=1}^{\infty}x_rY^r\right)=\sum_{r=1}^{\infty}\Tr_r(\phi_{\log};x_1,\dots,x_r)Y^r
\ \quad (\text{Girard--Newton formula}).$$

\begin{thm}
For any positive integer $k$, we have
\begin{align}
\wp_2^{(2k-2)}(z) = -k(2k-1)!\Tr_k(\phi_{\log};\wp_{2}(z),-\wp_{2,2}(z),\wp_{2,2,2}(z),\dots,(-1)^{k+1}\wp_{\{2\}^k}(z)).
\end{align}
and
\begin{align}
\wp_2^{(2k-2)}(z) = (2k-1)!(G_{2k}+k\cdot \Tr_k(\phi_{\log};\wp(z),-3G_4,-5G_6,\dots,-(2k-1)G_{2k})).
\end{align}
\end{thm}

\begin{proof}
By \cref{thm:genelating-function} case $h=2$, we have
\begin{align}
1-\sum_{k=0}^{\infty} (-1)^{r-1}\wp_{\{2\}^k}(z) Y^{k}
=\exp\left(-\sum_{k=1}^{\infty} \frac{1}{k(2k-1)!}\wp_2^{(2k-2)}(z)Y^{k}\right).
\end{align}
Taking the $\log$ of both sides and comparing the coefficients of $Y^{k}$, we have
\begin{align}
\wp_2^{(2k-2)}(z) = -k(2k-1)!\Tr_k(\phi_{\log};\wp_{2}(z),-\wp_{2,2}(z),\wp_{2,2,2}(z),\dots,(-1)^{k+1}\wp_{\{2\}^k}(z)).
\end{align}
Moreover, by \cref{thm:genelating-function} case $h=2$, using \cref{eq:Laurent-expansion-wp}, \cref{eq:def-sigma}, and \cref{eq:famous-eq}, we have
\begin{align}
\exp\left(\sum_{k=1}^{\infty} \frac{1}{k}\left(G_{2k}-\frac{\wp_2^{(2k-2)}(z)}{(2k-1)!}\right) Y^{2k}\right)
&= Y^2\frac{\sigma(z+Y)\sigma(z-Y)}{\sigma(z)^2\sigma(Y)^2}
= Y^2(\wp_2(Y)-\wp_2(z))
\\
&=1-\wp(z)Y^2+\sum_{k=2}^{\infty}(2k-1)G_{2k}Y^{2k}.
\end{align}
Taking the $\log$ of both sides and comparing the coefficients of $Y^{2k}$, we have
\begin{align}
\frac{1}{k}\left(\frac{\wp_2^{(2k-2)}(z)}{(2k-1)!}-G_{2k}\right)
= \Tr_k(\phi_{\log};\wp(z),-3G_4,-5G_6,\dots,-(2k-1)G_{2k}).
\end{align}
Therefore, we obtain
\begin{align}
\wp_2^{(2k-2)}(z) = (2k-1)!(G_{2k}+k\cdot \Tr_k(\phi_{\log};\wp(z),-3G_4,-5G_6,\dots,-(2k-1)G_{2k})),
\end{align}
and this completes the proof of the statement.
\end{proof}

\section{Multiple $\wp$-function with general index}\label{sec:general-index}

In this section, we obtain expressions for multiple $\wp$-functions with general index in terms of single $\wp$-functions and multiple Eisenstein series. 

\begin{thm}\label{thm:multi-wp-general-index}
For $\boldsymbol{x}=(x_1,\dots,x_r)$ and positive integers $1\leq j\leq r$, we set
\begin{align}
\mathcal{Q}_{r,i}(z;\tau;\boldsymbol{x})=\til{\wp}_{\{2\}^{r-i}}(z-x_{i+1},\dots,z-x_r;\tau)\til{\wp}_{\{2\}^{i-1}}(x_{i-1}-z,\dots,x_1-z;\tau).
\end{align}
Then, for $\tau\in\HH$ and $\boldsymbol{x}=(x_1,\dots,x_r)$ with $|x_i|\ll_{\tau} 1$ for each $i$, we have
\begin{equation}\label{eq:multi-wp-general-index}
\wp_{\{2\}^{r}}(z-x_1,\dots,z-x_r;\tau)
=\sum_{i=1}^{r}\frac{d}{dx_i}\left[(\zeta(z-x_i)+\zeta(x_i))\mathcal{Q}_{r,i}(x_i;\tau;\boldsymbol{x})\right]
+\wp_{\{2\}^{r}}(x_r,\dots,x_1;\tau).
\end{equation}
\end{thm}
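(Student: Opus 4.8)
The plan is to regard $F(z)\ceq\wp_{\{2\}^r}(z-x_1,\dots,z-x_r;\tau)$ as an elliptic function of $z$ and to match it with the right-hand side principal part by principal part, fixing the remaining additive constant by evaluating at $z=0$. Throughout I may assume $x_1,\dots,x_r$ are distinct modulo $L_\tau$ and none lies in $L_\tau$; the general case then follows by analytic continuation in $\boldsymbol{x}$. By \cref{lem:doubly-multi-p} the function $F$ is invariant under $z\mapsto z+w$ for $w\in L_\tau$, and by \cref{lem:def-multi-p} it is meromorphic with poles only where some $z-x_j\in L_\tau$; in a fundamental domain these are the points $x_1,\dots,x_r$, of order at most two.

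The key step is to compute the principal part of $F$ at $z=x_i$. For generic $\boldsymbol{x}$, a term of the iterated sum defining $F$ is singular at $z=x_i$ only when its $i$-th lattice index vanishes, so the polar part comes exactly from the summands with $w_i=0$, subject to $w_1\prec\dots\prec w_{i-1}\prec 0\prec w_{i+1}\prec\dots\prec w_r$. Factoring out $1/(z-x_i)^2$ and reindexing the left block by $w_s\mapsto -w_{i-s}$ (which reverses $\prec$ and recasts each factor $(z-x_j-w_j)^{-2}$ into the shape occurring in $\til\wp$) identifies the two remaining blocks as $\til\wp_{\{2\}^{i-1}}(x_{i-1}-z,\dots,x_1-z)$ and $\til\wp_{\{2\}^{r-i}}(z-x_{i+1},\dots,z-x_r)$, whose product is precisely $\mathcal{Q}_{r,i}(z;\tau;\boldsymbol{x})$. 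Since $\mathcal{Q}_{r,i}(z)$ is holomorphic at $z=x_i$, this shows that near $z=x_i$,
\begin{equation*}
F(z)=\frac{\mathcal{Q}_{r,i}(z)}{(z-x_i)^2}+(\text{holomorphic})=\frac{\mathcal{Q}_{r,i}(x_i)}{(z-x_i)^2}+\frac{\mathcal{Q}_{r,i}'(x_i)}{z-x_i}+(\text{holomorphic}),
\end{equation*}
where $\mathcal{Q}_{r,i}'$ denotes the derivative in the first slot. I expect this to be the main obstacle, since it requires tracking the order condition carefully under the reindexing.

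Next I analyze the right-hand side. Write $G(z)$ for $\sum_{i=1}^r\frac{d}{dx_i}[(\zeta(z-x_i)+\zeta(x_i))\mathcal{Q}_{r,i}(x_i)]$, so the claim reads $F(z)=G(z)+\wp_{\{2\}^r}(x_r,\dots,x_1)$. Because $x_i$ enters $\mathcal{Q}_{r,i}(x_i)$ only through the evaluation point, and since $\zeta'=-\wp_2$ (see \cref{eq:Laurent-expansion-zeta,eq:Laurent-expansion-wp}) gives $\frac{d}{dx_i}\zeta(z-x_i)=\wp_2(z-x_i)$ and $\frac{d}{dx_i}\zeta(x_i)=-\wp_2(x_i)$, the product rule yields
\begin{equation*}
G(z)=\sum_{i=1}^r\Bigl[\bigl(\wp_2(z-x_i)-\wp_2(x_i)\bigr)\mathcal{Q}_{r,i}(x_i)+\bigl(\zeta(z-x_i)+\zeta(x_i)\bigr)\mathcal{Q}_{r,i}'(x_i)\Bigr].
\end{equation*}
The coefficients $\mathcal{Q}_{r,i}(x_i),\mathcal{Q}_{r,i}'(x_i)$ are constants in $z$, so the only $z$-dependence is through $\wp_2(z-x_i)$ and $\zeta(z-x_i)$; hence $G$ has exactly the same principal part as $F$ at each $x_i$. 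Applying the residue theorem to the elliptic function $F$ gives $\sum_{i=1}^r\mathcal{Q}_{r,i}'(x_i)=0$, so the $\zeta$-terms have no quasi-period defect (using $\zeta(z+w)=\zeta(z)+\eta_w$), whence $G$ is elliptic. Therefore $F-G$ is a pole-free elliptic function, i.e. a constant in $z$.

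Finally I pin down the constant by setting $z=0$. Each summand of the last display vanishes at $z=0$: the coefficient $\wp_2(z-x_i)-\wp_2(x_i)$ vanishes because $\wp_2$ is even, and the coefficient $\zeta(z-x_i)+\zeta(x_i)$ vanishes because $\zeta$ is odd; hence $G(0)=0$. On the other hand $F(0)=\wp_{\{2\}^r}(-x_1,\dots,-x_r)$, and the substitution $w_s\mapsto -w_{r+1-s}$ in the defining sum (which preserves the chain $\prec$ and the truncation regions) gives $\wp_{\{2\}^r}(-x_1,\dots,-x_r)=\wp_{\{2\}^r}(x_r,\dots,x_1)$. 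Consequently $F(z)-G(z)=F(0)-G(0)=\wp_{\{2\}^r}(x_r,\dots,x_1)$ for all $z$, which is exactly the asserted identity.
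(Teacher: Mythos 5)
Your proof is correct, and structurally it is the paper's proof: your principal-part extraction at $z=x_i$ (the summands with $w_i=0$, reindexed so that the left block becomes $\til{\wp}_{\{2\}^{i-1}}(x_{i-1}-z,\dots,x_1-z)$) is exactly the decomposition the paper imports from \cref{lem:p-to-p-tilde} specialized to $k_j=2$ (proved in \Cref{appendix}), your difference $F-G$ is the paper's auxiliary function $F_r$, and your determination of the constant by setting $z=0$ — using that $\wp_2$ is even, $\zeta$ is odd, and $\wp_{\{2\}^r}(-x_1,\dots,-x_r)=\wp_{\{2\}^r}(x_r,\dots,x_1)$ — is identical. The one genuine divergence is the mechanism for the linchpin vanishing $\sum_{i=1}^r\frac{d}{dx_i}\mathcal{Q}_{r,i}(x_i;\tau;\boldsymbol{x})=0$, i.e.\ \cref{eq:antipode-Q} (equivalently \cref{cor:antipode-p}). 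The paper proves that $F_r$ is entire with constant quasi-period defects \cref{eq:F-eta}, hence of the form $az+b$, and then plays the two periods $1$ and $\tau$ against each other via Legendre's relation (\cref{lem:Legendre's-relation}) to force $a=0$ and the vanishing of the sum simultaneously. You instead read off the residue of $F$ at $x_i$ as $\frac{d}{dx_i}\mathcal{Q}_{r,i}(x_i;\tau;\boldsymbol{x})$ and apply the residue theorem for elliptic functions (the residues over a fundamental parallelogram sum to zero), which yields the identity in one stroke; after that, $G$ is honestly elliptic and $F-G$ is a pole-free elliptic function, hence constant, with no affine intermediate step. The two mechanisms have equal depth — the vanishing-residue-sum theorem and Legendre's relation both come from the same contour integral around a fundamental parallelogram — but yours is marginally more economical, while the paper's makes the reliance on Legendre's relation explicit (it is stated as a lemma precisely for this purpose). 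Your explicit reduction to $x_1,\dots,x_r$ pairwise distinct modulo $L_\tau$, with the general case recovered by analytic continuation in $\boldsymbol{x}$, is a legitimate bookkeeping point that the paper leaves implicit.
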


\begin{proof}
We know that $\wp_{\{2\}^{r}}(z-x_1,\dots,z-x_r;\tau)-\sum_{i=1}^{r}\frac{1}{(z-x_i)^2}\mathcal{Q}_{r,i}(z;\tau;\boldsymbol{x})$ is holomorphic at $z=x_i$ for each $i$ by considering the decomposition of the summation as same as \eqref{eq:sum decomp}. Therefore, the function
\begin{equation}\label{eq:def-F}
F_r(z;\tau;\boldsymbol{x})\ceq \wp_{\{2\}^{r}}(z-x_1,\dots,z-x_r;\tau)
-\sum_{i=1}^{r}\wp_2(z-x_i)\mathcal{Q}_{r,i}(x_i;\tau;\boldsymbol{x})
-\sum_{i=1}^{r}\zeta(z-x_i)\frac{d}{dx_i}\mathcal{Q}_{r,i}(x_i;\tau;\boldsymbol{x})
\end{equation}
is holomorphic on $z=x_i$ for each $i$, as follows from the fact that the principal parts in the Laurent expansions in \cref{eq:Laurent-expansion-wp,eq:Laurent-expansion-zeta} are $1/z^2$ and $1/z$, respectively. Then, for any $w\in\Z\tau+\Z$, by the double periodicity of $\wp_{\{2\}^{r}}(z-x_1,\dots,z-x_r;\tau)$ as a function of $z$, we have
\begin{align}\label{eq:F-eta}
F_r(z+w;\tau;\boldsymbol{x})-F_r(z;\tau;\boldsymbol{x}) = -\eta_w\sum_{i=1}^{r}\frac{d}{dx_i}\mathcal{Q}_{r,i}(x_i;\tau;\boldsymbol{x}),
\end{align}
where $\eta_w=\zeta(z+w)-\zeta(z)$. Remark that $\eta_w$ is independent of $z$. Hence, $\frac{d}{dz}F_r(z;\tau;\boldsymbol{x})$ is a doubly periodic function and holomorphic on $\mathbb{C}$, and therefore $F_r(z;\tau;\boldsymbol{x})$ can be expressed as $F_r(z;\tau;\boldsymbol{x})=az+b$ for some $a,b\in\mathbb{C}$. 
Therefore, we have $\tau(F_r(z+1;\tau;\boldsymbol{x})-F_r(z;\tau;\boldsymbol{x}))=F_r(z+\tau;\tau;\boldsymbol{x})-F_r(z;\tau;\boldsymbol{x})$ and, by the Equation \cref{eq:F-eta}, this equation is equivalent to the following:
\begin{align}
-\tau\eta_1\sum_{i=1}^{r}\frac{d}{dx_i}\mathcal{Q}_{r,i}(x_i;\tau;\boldsymbol{x})=-\eta_\tau\sum_{i=1}^{r}\frac{d}{dx_i}\mathcal{Q}_{r,i}(x_i;\tau;\boldsymbol{x}).
\end{align}
It is known that $\eta_1\tau-\eta_\tau=2\pi i$, which is known as the Legendre relation (see \cite{Lang}). From this formula, we have
\begin{align}\label{eq:antipode-Q}
\sum_{i=1}^{r}\frac{d}{dx_i}\mathcal{Q}_{r,i}(x_i;\tau;\boldsymbol{x})=0.
\end{align}
Hence, we have $F_r(z;\tau;\boldsymbol{x})=b$, which implies that $F_r(z;\tau;\boldsymbol{x})$ is independent of $z$. Therefore, by substituting $z=0$, we have
\begin{equation}\label{eq:const-F}
F_r(-;\tau;\boldsymbol{x})
=\wp_{\{2\}^{r}}(x_r,\dots,x_1;\tau)
-\sum_{i=1}^{r}\wp_2(x_i)\mathcal{Q}_{r,i}(x_i;\tau;\boldsymbol{x})
+\sum_{i=1}^{r}\zeta(x_i)\frac{d}{dx_i}\mathcal{Q}_{r,i}(x_i;\tau;\boldsymbol{x}).
\end{equation}
Therefore, by combining and rearranging \cref{eq:def-F,eq:const-F}, we obtain the theorem.
\end{proof}

\begin{cor}\label{cor:antipode-p}
For a positive integer $r$, we have
\begin{align}
\sum_{i=1}^{r}\frac{d}{dx_i}\til{\wp}_{\{2\}^{r-i}}(x_i-x_{i+1},\dots,x_i-x_r;\tau)
\til{\wp}_{\{2\}^{i-1}}(x_{i-1}-x_i,\dots,x_1-x_i;\tau)=0.
\end{align}
\end{cor}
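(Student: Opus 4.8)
The plan is to recognize that this corollary is an immediate reformulation of equation \cref{eq:antipode-Q}, which was already established inside the proof of \cref{thm:multi-wp-general-index}. There we introduced
\[
\mathcal{Q}_{r,i}(z;\tau;\boldsymbol{x})=\til{\wp}_{\{2\}^{r-i}}(z-x_{i+1},\dots,z-x_r;\tau)\,\til{\wp}_{\{2\}^{i-1}}(x_{i-1}-z,\dots,x_1-z;\tau),
\]
and, by combining the pole-cancellation structure of the auxiliary function $F_r(z;\tau;\boldsymbol{x})$ with Legendre's relation (\cref{lem:Legendre's-relation}), we deduced $\sum_{i=1}^{r}\frac{d}{dx_i}\mathcal{Q}_{r,i}(x_i;\tau;\boldsymbol{x})=0$. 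The corollary is then obtained simply by writing out $\mathcal{Q}_{r,i}(x_i;\tau;\boldsymbol{x})$ explicitly.

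First I would substitute $z=x_i$ into the definition of $\mathcal{Q}_{r,i}$, which gives
\[
\mathcal{Q}_{r,i}(x_i;\tau;\boldsymbol{x})=\til{\wp}_{\{2\}^{r-i}}(x_i-x_{i+1},\dots,x_i-x_r;\tau)\,\til{\wp}_{\{2\}^{i-1}}(x_{i-1}-x_i,\dots,x_1-x_i;\tau),
\]
i.e.\ exactly the product appearing in the summand of the corollary.

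Next I would reconcile the two derivative conventions. The function $\mathcal{Q}_{r,i}(z;\tau;\boldsymbol{x})$ involves the components of $\boldsymbol{x}$ only through $x_1,\dots,x_{i-1}$ and $x_{i+1},\dots,x_r$; the variable $x_i$ itself never occurs in it. Hence, after the substitution $z=x_i$, the variable $x_i$ enters $\mathcal{Q}_{r,i}(x_i;\tau;\boldsymbol{x})$ only through the former $z$-slot, so the total derivative $\frac{d}{dx_i}$ of the displayed product coincides with $\left.\frac{\partial}{\partial z}\mathcal{Q}_{r,i}(z;\tau;\boldsymbol{x})\right|_{z=x_i}$, which is precisely the $i$-th term of \cref{eq:antipode-Q}. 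Summing over $i$ and invoking \cref{eq:antipode-Q} yields the claimed vanishing; and since both sides are meromorphic in $\boldsymbol{x}$ by \cref{lem:def-multi-p}, the identity, first obtained for $|x_i|\ll_\tau 1$, persists throughout $\C^r$ away from the poles.

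Since the genuine analytic input—the Laurent-expansion bookkeeping for $F_r$ and the appeal to Legendre's relation—was already carried out in proving \cref{thm:multi-wp-general-index}, I expect no real obstacle here. The only point demanding care is the purely notational verification that the corollary's summand equals $\mathcal{Q}_{r,i}(x_i;\tau;\boldsymbol{x})$ and that its total $x_i$-derivative agrees with the $z$-derivative occurring in \cref{eq:antipode-Q}.
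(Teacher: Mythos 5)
Your proposal is correct and takes essentially the same approach as the paper: the paper's entire proof of \Cref{cor:antipode-p} is the one-line remark that the statement is exactly \cref{eq:antipode-Q}, established via the function $F_r$ and Legendre's relation inside the proof of \Cref{thm:multi-wp-general-index}. Your extra check that the total $\frac{d}{dx_i}$-derivative coincides with $\left.\frac{\partial}{\partial z}\mathcal{Q}_{r,i}(z;\tau;\boldsymbol{x})\right|_{z=x_i}$ (because $\mathcal{Q}_{r,i}$ does not involve $x_i$) merely makes explicit what the paper leaves implicit.
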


\begin{proof}
This is exactly \cref{eq:antipode-Q}.
\end{proof}

This identity is equivalent to the shuffle antipode relation among multiple Eisenstein series, which is a family of algebraic relations induced from the antipode of the shuffle Hopf algebra. These relations are also derived from the results of Bachmann--Tasaka \cite{BT}, the construction of shuffle regularized multiple Eisenstein series.

In the following formulas, multiple Eisenstein series with indices formally less than 2 may occasionally appear. However, note that such terms do not actually appear, since the corresponding binomial coefficients vanish.

\begin{cor}\label{cor:shuffle-antipode}
For integers $k_1,\dots,k_r\ge 2$ with $k_1+\cdots+k_r=k$, we have
\begin{align}
\sum_{i=1}^r\sum_{\substack{n_1,\dots,n_r\geq 0\\n_1+\cdots+n_r=k\\n_i=1}}(-1)^{k_i+n_i+\cdots+n_r}\prod_{\substack{p=1\\p\ne i}}^r\binom{n_p-1}{k_p-1}\til{G}_{n_{i-1},\dots,n_1}(\tau)\til{G}_{n_{i+1},\dots,n_r}(\tau)=0.
\end{align}
\end{cor}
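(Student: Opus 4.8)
The plan is to derive \cref{cor:shuffle-antipode} from \cref{cor:antipode-p} purely by expanding every factor into a power series in $x_1,\dots,x_r$ and reading off one Taylor coefficient. Since \cref{cor:antipode-p} is valid for all $|x_j|\ll_\tau 1$, it is an identity of convergent multiple power series, so every Taylor coefficient of $\sum_{i=1}^r\frac{d}{dx_i}P_i$ vanishes, where
\[
P_i=\til{\wp}_{\{2\}^{r-i}}(x_i-x_{i+1},\dots,x_i-x_r;\tau)\,\til{\wp}_{\{2\}^{i-1}}(x_{i-1}-x_i,\dots,x_1-x_i;\tau).
\]
I claim that \cref{cor:shuffle-antipode} is exactly the coefficient of $x_1^{k_1-2}\cdots x_r^{k_r-2}$ in this identity.

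First I would expand both factors of $P_i$ using the $\{2\}$-case of \cref{lem:p-til-expansion} and rename the expansion exponents by $n_j=(\text{exponent})+2$, so that
\[
P_i=\sum_{n_j\ge 2\,(j\neq i)}\Big[\prod_{j\neq i}(n_j-1)\Big]\til{G}_{n_{i+1},\dots,n_r}(\tau)\,\til{G}_{n_{i-1},\dots,n_1}(\tau)\prod_{j>i}(x_i-x_j)^{n_j-2}\prod_{j<i}(x_j-x_i)^{n_j-2}.
\]
The two multiple Eisenstein series already carry the correct depths $r-i$ and $i-1$, while the products of differences are what will generate the binomial coefficients and the signs.

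Next I would extract the coefficient of $x_1^{k_1-2}\cdots x_r^{k_r-2}$. Because $\frac{d}{dx_i}$ lowers the $x_i$-degree by one, this coefficient in $\frac{d}{dx_i}P_i$ equals $(k_i-1)$ times the coefficient of $x_i^{k_i-1}\prod_{j\neq i}x_j^{k_j-2}$ in $P_i$. Expanding each $(x_i-x_j)^{n_j-2}$ and $(x_j-x_i)^{n_j-2}$ by the binomial theorem and matching the power $x_j^{k_j-2}$ forces the weight relation $\sum_{j\neq i}n_j=k-1$ (obtained from matching the total power of $x_i$, which is $\sum_{j\neq i}(n_j-k_j)=k_i-1$), and produces a factor $\binom{n_j-2}{k_j-2}$ for each $j\neq i$ together with a sign $(-1)^{\sum_{j>i}(k_j-2)+\sum_{j<i}(n_j-k_j)}$. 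The elementary identity $(n_j-1)\binom{n_j-2}{k_j-2}=(k_j-1)\binom{n_j-1}{k_j-1}$ then rewrites $\prod_{j\neq i}(n_j-1)\binom{n_j-2}{k_j-2}$ as $\prod_{j\neq i}(k_j-1)\cdot\prod_{j\neq i}\binom{n_j-1}{k_j-1}$, so that the overall factor $(k_i-1)\prod_{j\neq i}(k_j-1)=\prod_{p=1}^r(k_p-1)$ is independent of $i$ and nonzero; dividing it out leaves exactly the sum appearing in \cref{cor:shuffle-antipode}.

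The only delicate point, and the step where care is really needed, is the sign. Reducing the collected exponent modulo $2$ gives $(-1)^{k+k_i+\sum_{j<i}n_j}$; substituting $\sum_{j<i}n_j=(k-1)-\sum_{j>i}n_j$ from the weight relation converts this into $(-1)^{k_i+1+\sum_{j>i}n_j}$, which is precisely $(-1)^{k_i+n_i+\cdots+n_r}$ once one adopts the convention $n_i=1$. Finally, although the expansion only yields terms with $n_j\ge 2$, enlarging the summation range to all $n_j\ge 0$ changes nothing, since $\binom{n_j-1}{k_j-1}=0$ for $0\le n_j<k_j$; this is exactly the vanishing remarked upon just before the statement. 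Matching the depths, the weight $\sum_p n_p=k$ with $n_i=1$, the binomials, and the signs then completes the derivation.
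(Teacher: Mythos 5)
Your proposal is correct and is exactly the paper's proof, which likewise expands the identity of \cref{cor:antipode-p} via \cref{lem:p-til-expansion} and compares Taylor coefficients; you have simply carried out the details the paper leaves implicit (extracting the coefficient of $x_1^{k_1-2}\cdots x_r^{k_r-2}$, the identity $(n_j-1)\binom{n_j-2}{k_j-2}=(k_j-1)\binom{n_j-1}{k_j-1}$, cancelling the common factor $\prod_{p=1}^r(k_p-1)$, and the sign bookkeeping, all of which check out). Your handling of the extension to $n_j\ge 0$ via vanishing binomial coefficients also matches the convention the paper states just before the corollary.
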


\begin{proof}
By expanding the left-hand side of the equation in \cref{cor:antipode-p} into a Laurent series using \cref{lem:p-til-expansion} and comparing the coefficients, we obtain the desired result.
\end{proof}

From \cref{thm:multi-wp-general-index}, we obtain an explicit expression of the multiple $\wp$-function in terms of the single $\wp$-functions.

\begin{thm}[Restatement of \cref{thm:rough-state-coeff-two}]\label{thm:depth-2-wp-formula}
For integers $k_1,\dots,k_r\geq 2$ with $k_1+\cdots+k_r=k$, we have
{\footnotesize
\begin{equation}\label{eq:explicit reduction}
\begin{split}
\wp_{k_1,\dots,k_r}(z;\tau)
=&\sum_{i=1}^{r}\sum_{\substack{n_1,\dots,n_r\geq 2\\n_1+\cdots+n_r=k}}(-1)^{k_i+n_{i}+\cdots+n_r}\prod_{\substack{j=1\\j\neq i}}^{r}\binom{n_j-1}{k_j-1}
\til{G}_{n_{i-1},\dots,n_1}(\tau)\til{G}_{n_{i+1},\dots,n_r}(\tau)(\wp_{n_i}(z;\tau)-G_{n_i}(\tau))
\\
&+\sum_{i=1}^{r}\sum_{\substack{n_1,\dots,n_r\geq 0\\n_1+\cdots+n_r=k\\n_i=0}}(-1)^{k_i+n_{i+1}+\cdots+n_r}\prod_{\substack{j=1\\j\neq i}}^{r}\binom{n_j-1}{k_j-1}
\til{G}_{n_{i-1},\dots,n_1}(\tau)\til{G}_{n_{i+1},\dots,n_r}(\tau)
\\
&+\sum_{i=0}^{r}(-1)^{k_{i+1}+\cdots+k_r}\til{G}_{k_{i},\dots,k_1}(\tau)\til{G}_{k_{i+1},\dots,k_r}(\tau).
\end{split}\end{equation}
}
\end{thm}

\begin{proof}
By \cref{lem:p-til-expansion} \cref{eq:mpf gen func}, the left-hand side of \cref{eq:multi-wp-general-index} can be expanded as follows.
\begin{align}
    \wp_{\{2\}^r}(z-x_1,\dots,z-x_r;\tau) &= \sum_{k_1,\dots,k_r\geq2}(k_1-1)\cdots(k_r-1)\wp_{k_1,\dots,k_r}(z;\tau)x_1^{k_1-2}\cdots x_r^{k_r-2}.
\end{align}
On the other hand, the right-hand side of \cref{eq:multi-wp-general-index} can be written as follows.
\begin{align}\label{eq:three terms}
    \sum_{i=1}^r\frac{d}{dx_i}\lbrack\zeta(z-x_i;\tau)+\zeta(x_i;\tau)\rbrack\mathcal{Q}_{r,i}(x_i;\tau;\boldsymbol{x})+\sum_{i=1}^r\left(\zeta(z-x_i;\tau)+\zeta(x_i;\tau)\right)\frac{d}{dx_i}\mathcal{Q}_{r,i}(x_i;\tau;\boldsymbol{x})+\wp_{\{2\}^{r}}(x_r,\dots,x_1;\tau).
\end{align}
It suffices to show that the coefficient of $(k_1-1)\cdots(k_r-1)x_1^{k_1-2}\cdots x_r^{k_r-2}$ in \cref{eq:three terms} coincides with the right-hand side of \cref{eq:explicit reduction}. First, we have
\begin{align}
    \zeta(z-x_i)+\zeta(x_i)&=\frac{1}{x_i}+\sum_{k_i\geq2}(\wp_{k_i-1}(z;\tau)-G_{k_i-1})x_i^{k_i-2},\\
    \mathcal{Q}_{r,i}(x_i;\tau;\boldsymbol{x})&=\sum_{k_1,\dots,k_{i-1},k_{i+1},\dots,k_r\geq2}(-1)^{k_1+\cdots+k_{i-1}}\til{G}_{k_{i-1},\dots,k_1}\til{G}_{k_{i+1},\dots,k_r}\prod_{\substack{j=1\\j\neq i}}^r(k_j-1)(x_i-x_j)^{k_j-2}
\end{align}
where we denote $\wp_1(z;\tau)=\zeta(z;\tau)$, and we used \cref{eq:p til exp} for the later expansion. Using these expansion, $\sum_{i=1}^r\frac{d}{dx_i}\lbrack\zeta(z-x_i;\tau)+\zeta(x_i;\tau)\rbrack\mathcal{Q}_{r,i}(x_i;\tau;\boldsymbol{x})$ can be written as follows:
{\footnotesize
\begin{align}
    &-\sum_{i=1}^r\summ{n_1,\dots,n_{i-1},n_{i+1},\dots,n_r\geq2\\k_1,\dots,k_{i-1},k_{i+1},\dots,k_r\geq2}(-1)^{\widehat{n}+k_1+\cdots+k_{i-1}}\til{G}_{k_{i-1},\dots,k_1}\til{G}_{k_{i+1},\dots,k_r}x_i^{\widehat{k}-\widehat{n}-2}\prod_{\substack{j=1\\j\neq i}}^r(n_j-1)\binom{k_j-1}{n_j-1}x_j^{n_j-2}\\
    &+\sum_{i=1}^r\summ{n_1,\dots,n_{i-1},n_{i+1},\dots,n_r\geq2\\k_1,\dots,k_r\geq2}(-1)^{\widehat{n}+k_1+\cdots+k_{i-1}}(k_i-1)\til{G}_{k_{i-1},\dots,k_1}\til{G}_{k_{i+1},\dots,k_r}(\wp_{k_i}(z)-G_{k_i})x_i^{k-\widehat{n}-2}\prod_{\substack{j=1\\j\neq i}}^r(n_j-1)\binom{k_j-1}{n_j-1}x_j^{n_j-2},\label{eq:first term}
\end{align}
}
where $\widehat{n}=n_1+\cdots+n_{i-1}+n_{i+1}+\cdots+n_r$, $\widehat{k}=k_1+\cdots+k_{i-1}+k_{i+1}+\cdots+k_r$ and $k=k_1+\cdots+k_r$. Also $\sum_{i=1}^r\left(\zeta(z-x_i;\tau)+\zeta(x_i;\tau)\right)\frac{d}{dx_i}\mathcal{Q}_{r,i}(x_i;\tau;\boldsymbol{x})$ can be computed as follows:
{\footnotesize
\begin{align}
    &\sum_{i=1}^r\frac{1}{x_i}\sum_{k_1,\dots,k_{i-1},k_{i+1},\dots,k_r\geq2}(-1)^{k_1+\cdots+k_{i-1}}\til{G}_{k_{i-1},\dots,k_1}\til{G}_{k_{i+1},\dots,k_r}\summ{p=1\\p\neq i}^r(k_p-1)(k_p-2)(x_i-x_p)^{k_p-3}\prod_{\substack{j=1\\j\neq i,p}}^r(k_j-1)(x_i-x_j)^{k_j-2}\\
    &+\sum_{i=1}^r\sum_{k_1,\dots,k_r\geq2}\!(-1)^{k_1+\cdots+k_{i-1}}\!(\wp_{k_i-1}(z)-G_{k_i-1})\til{G}_{k_{i-1},\dots,k_1}\til{G}_{k_{i+1},\dots,k_r}x_i^{k_i-2}\summ{p=1\\p\neq i}^r(k_p-1)(k_p-2)(x_i-x_p)^{k_p-3}\prod_{\substack{j-1\\j\neq i,p}}^r(k_j-1)(x_i-x_j)^{k_j-2}.
\end{align}
}
Note that the term for $k_i=2$ in the second sum vanishes since we have $\sum_{i=1}^r\frac{d}{dx_i}\mathcal{Q}_{r,i}(x_i;\tau;\boldsymbol{x})=0$ \cref{eq:antipode-Q}. Thus, this equals to
{\footnotesize
\begin{align}
    &\sum_{i=1}^r\summ{n_1,\dots,n_{i-1},n_{i+1},\dots,n_r\geq2\\k_1,\dots,k_{i-1},k_{i+1},\dots,k_r\geq2}(-1)^{\widehat{n}+k_1+\cdots+k_{i-1}}\til{G}_{k_{i-1},\dots,k_1}\til{G}_{k_{i+1},\dots,k_r}(\widehat{k}-\widehat{n})x_i^{\widehat{k}-\widehat{n}-2}\prod_{\substack{j=1\\j\neq i}}^r(n_j-1)\binom{k_j-1}{n_j-1}x_j^{n_j-2}\\
    &+\sum_{i=1}^r\summ{n_1,\dots,n_{i-1},n_{i+1},\dots,n_r\geq2\\k_1,\dots,k_r\geq2}(-1)^{\widehat{n}+k_1+\cdots+k_{i-1}}\til{G}_{k_{i-1},\dots,k_1}\til{G}_{k_{i+1},\dots,k_r}(\wp_{k_i}(z)-G_{k_i})(\widehat{k}-\widehat{n})x_i^{k-\widehat{n}-2}\prod_{\substack{j=1\\j\neq i}}^r(n_j-1)\binom{k_j-1}{n_j-1}x_j^{n_j-2}.
    \label{eq:second term}
\end{align}
}
Combining \cref{eq:first term}, \cref{eq:second term} and \cref{eq:p til exp}, we know that the coefficient of $(k_1-1)\cdots(k_r-1)x_1^{k_1-2}\cdots x_r^{k_r-2}$ in \cref{eq:three terms} coincides with the right-hand side of \cref{eq:explicit reduction}.
\end{proof}

In the Laurent expansion of \cref{thm:depth-2-wp-formula}, the coefficients of $z^m$ for $m\leq 0$ yield only trivial relations, whereas the coefficients of $z^m$ for $m>0$ produce non-trivial relations among multiple Eisenstein series. The following result presents relations obtained by comparing the coefficients of $z^m$ with $m>0$. Numerically, we expect that these relations can be obtained by \cref{cor:shuffle-antipode} and the harmonic product.

\begin{cor}\label{cor:Eisen-relation-from-p} For any integers $k_1,\dots,k_r\geq 2$, and $m>0$, we have
\begin{align}
\sum_{i=1}^{r}\sum_{\substack{n_1,\dots,n_r\geq 0\\n_1+\cdots+n_r=m+k}}&(-1)^{k_i+n_{i+1}+\cdots+n_r}
\binom{n_i-1}{m}\prod_{\substack{j=1\\j\neq i}}^{r}\binom{n_j-1}{k_j-1}
\til{G}_{n_{i-1},\dots,n_1}(\tau)\til{G}_{n_{i+1},\dots,n_r}(\tau)G_{n_i}(\tau)
\\
= \sum_{i=0}^{r}&\sum_{\substack{n_1,\dots,n_r\geq 0\\n_1+\dots+n_r=m+k}}
(-1)^{n_{i+1}+\cdots+n_r+m}
\prod_{j=1}^{r}\binom{n_j-1}{k_j-1}
\til{G}_{n_{i},\dots,n_1}(\tau)\til{G}_{n_{i+1},\dots,n_r}(\tau)
\\
+& \sum_{i=1}^{r}\sum_{\substack{n_1,\dots,n_r\geq 0\\n_1+\dots+n_r=m+k\\n_i=0}}
(-1)^{k_i+n_{i+1}+\cdots+n_{r}+m}
\prod_{\substack{j=1\\j\neq i}}^{r}\binom{n_j-1}{k_j-1}
\til{G}_{n_{i-1},\dots,n_1}(\tau)\til{G}_{n_{i+1},\dots,n_r}(\tau).
\end{align}
\end{cor}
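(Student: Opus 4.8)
The plan is to read off the corollary by comparing, for each fixed $m>0$, the coefficient of $z^m$ on the two sides of the identity in \cref{thm:depth-2-wp-formula}, regarded as an identity of Laurent series in $z$ about the origin. This is legitimate because both sides are elliptic functions of $z$ whose only singularity near $0$ is a pole at the lattice points, so their Laurent expansions at $z=0$ must agree coefficientwise. The first thing to note is that the second and third sums in \cref{thm:depth-2-wp-formula} are constants in $z$, so for $m>0$ they contribute nothing; only the first sum, carrying the factors $\wp_{n_i}(z;\tau)-G_{n_i}(\tau)$, is relevant.

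For the right-hand side of \cref{thm:depth-2-wp-formula} I would insert the Laurent expansion of each single $\wp$-function. From \cref{eq:Laurent-expansion-wp} together with $\wp_n=\tfrac{(-1)^n}{(n-1)!}\wp^{(n-2)}$ one obtains, for $m\ge 1$,
$$\operatorname{coeff}_{z^m}\bigl(\wp_n(z;\tau)-G_n(\tau)\bigr)=(-1)^n\binom{n+m-1}{n-1}G_{n+m}(\tau).$$
Substituting this into the first sum and reindexing so that the new summation variable records $n_i+m$ (which replaces $G_{n_i+m}$ by $G_{n_i}$, turns $\binom{n_i+m-1}{n_i-1}$ into $\binom{n_i-1}{m}$, and shifts the weight constraint from $k$ to $m+k$) produces exactly the left-hand side of the corollary; here the sign $(-1)^{k_i+n_i+\dots+n_r}\cdot(-1)^{n_i}$ collapses to $(-1)^{k_i+n_{i+1}+\dots+n_r}$ because the factor $(-1)^{2n_i}$ is trivial.

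For the left-hand side of \cref{thm:depth-2-wp-formula}, namely $\wp_{k_1,\dots,k_r}(z;\tau)=\wp_{k_1,\dots,k_r}(z,\dots,z;\tau)$, I would instead compute the same Laurent coefficient directly from \cref{lem:p-to-p-tilde} with all arguments set equal to $z$, expanding every factor $\til{\wp}$ by means of \cref{lem:p-til-expansion}. The first sum of \cref{lem:p-to-p-tilde} then yields the first sum on the right-hand side of the corollary, with the products $\til{G}_{n_i,\dots,n_1}\til{G}_{n_{i+1},\dots,n_r}$ ranging over all split points $i$, while the second sum of \cref{lem:p-to-p-tilde}, carrying the prefactor $z^{-k_i}$ that shifts the degree, produces precisely the $n_i=0$ contribution. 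Equating the two computed expressions for $\operatorname{coeff}_{z^m}\wp_{k_1,\dots,k_r}(z;\tau)$ gives the stated identity.

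The analytic content (constants dropping out, legitimacy of comparing Laurent coefficients) is immediate; the hard part will be the combinatorial bookkeeping. The delicate points are tracking the signs that accumulate from the factors $(-1)^{k_j}$ in \cref{lem:p-til-expansion}, from the $(-z)$-arguments in the ``negative'' factors of \cref{lem:p-to-p-tilde}, and from the prefactors, and reconciling them against the sign attached to the single-$\wp$ expansion above; and handling the boundary contributions in which some index falls below $2$. For the latter I would appeal to the convention stated just before the corollary, that the binomial coefficients $\binom{n_j-1}{k_j-1}$ vanish outside the admissible range, and verify that the only genuinely spurious terms introduced by enlarging the summation to $n_j\ge 0$ (those with an index equal to $1$) cancel in pairs across different split points $i$, so that the unconstrained sum faithfully reproduces the constrained one coming from \cref{thm:depth-2-wp-formula}.
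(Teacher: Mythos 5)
Your route is the paper's own: the paper offers no argument for \cref{cor:Eisen-relation-from-p} beyond the sentence preceding it, and what you propose --- discard the two constant sums, expand $\wp_{n_i}(z;\tau)-G_{n_i}(\tau)$ at $z^m$ as $(-1)^{n_i}\binom{n_i+m-1}{n_i-1}G_{n_i+m}(\tau)$, shift $n_i\mapsto n_i+m$, and expand $\wp_{k_1,\dots,k_r}(z;\tau)$ through \cref{lem:p-to-p-tilde} and \cref{lem:p-til-expansion} --- is exactly that computation, and each of these individual steps is correct. However, the two items you defer are not routine, and one of them does not go as you predict. On the $\til{G}$ side nothing needs to ``cancel in pairs'': every term with some $n_j\in\{0,1\}$ dies individually, since $\binom{n_j-1}{k_j-1}=0$ under the stated convention (as $k_j\ge 2$). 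The genuinely delicate stratum is on the $G_{n_i}$ side: after the shift, the terms with $n_i=m+1$ (your ``index equal to $1$'') carry $\binom{m}{m}=1$ and the factor $G_{m+1}$, which is nonzero for odd $m$, and these terms do not cancel pairwise in general depth. The correct mechanism is that this stratum equals $-G_{m+1}(\tau)$ times the left-hand side of \cref{cor:shuffle-antipode} in weight $k$ (set $n_i=1$ there and compare signs), hence vanishes by the antipode relation, which the paper has already proved and which your argument must invoke; the pairwise telescoping you anticipate is what the antipode sum happens to reduce to when $r=2$, but not beyond.

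Second, if you actually carry out the sign bookkeeping you flagged as ``delicate'', you will find that the split at $i$ in \cref{lem:p-to-p-tilde} contributes total sign $(-1)^{k_1+\cdots+k_i}\cdot(-1)^{k_1+\cdots+k_r}\cdot(-1)^{(n_1-k_1)+\cdots+(n_i-k_i)}=(-1)^{m+n_{i+1}+\cdots+n_r}$ once the constraint $n_1+\cdots+n_r=m+k$ is used, and likewise the $z^{-k_i}$ terms give $(-1)^{m+k_i+n_{i+1}+\cdots+n_r}$; that is, the Laurent coefficient of $\wp_{k_1,\dots,k_r}(z;\tau)$ is $(-1)^m$ times the printed right-hand side, not the right-hand side itself. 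Already $r=1$, $k_1=3$, $m=1$ exhibits this: the corollary's left side is $(-1)^{3}\binom{3}{1}G_4=-3G_4$, which indeed equals $[z^1]\wp_3(z)$ (from $\wp_3=-\tfrac12\wp'$), whereas the printed right side evaluates to $\binom{3}{2}\til{G}_4+\binom{3}{2}\til{G}_4=3G_4$. So your claim that the expansion ``yields the first sum on the right-hand side of the corollary'' cannot be realized literally: your method is sound and establishes the identity with an extra $(-1)^m$ on the $\til{G}$ side (equivalent to the printed statement exactly when $m$ is even), i.e.\ it leads to a corrected version of the statement rather than the statement as printed. As a proof proposal, asserting exact agreement while deferring the sign check is therefore a genuine gap --- one that, when closed, surfaces a sign slip in the corollary itself for odd $m$.
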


From \cref{thm:depth-2-wp-formula}, one naturally obtains relations among multiple zeta values. This formula is a special case of Equation (2.2) in \cite{Hirose2}, which was derived by comparing the constant terms in the Laurent expansions of multitangent functions. The following proof essentially employs the same method as that one. Using this formula, Hirose obtained in \cite{Hirose2} an explicit expression for the parity theorem of multiple zeta values. Here, we remark that the left-hand side of the following equation is not a ``symmetric multiple zeta value''.

\begin{cor}[{\cite[Equation (2.2)]{Hirose2}}]\label{cor:MZVs-relations}
For integers $k_1,\dots,k_r\geq 2$ with $k_1+\cdots+k_r=k$, we have
\begin{align}
\sum_{i=0}^{r}&(-1)^{k_{i+1}+\cdots+k_r}\zeta(k_{i},\dots,k_1)\zeta(k_{i+1},\dots,k_r)
\\
&\quad\quad = -\sum_{i=1}^{r}\sum_{\substack{n_1,\dots,n_r\ge 0\\n_1+\cdots+n_r=k\\n_i:\text{even}}}(-1)^{k_i+n_{i+1}+\cdots+n_r}\prod_{\substack{j=1\\j\neq i}}^{r}\binom{n_j-1}{k_j-1}
\frac{(2\pi i)^{n_i}B_{n_i}}{n_i!}\zeta(n_{i-1},\dots,n_1)\zeta(n_{i+1},\dots,n_r).
\end{align}
Here, $B_k$ is the $k$-th Bernoulli number.
\end{cor}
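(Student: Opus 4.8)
The plan is to recover the corollary from \cref{thm:depth-2-wp-formula} by extracting the coefficient of $z^0$ in the Laurent expansion at the origin and then letting $\tau\to i\infty$; this realises, on the elliptic side, Hirose's method of comparing constant terms of multitangent functions.

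First I would check that the first sum in \cref{thm:depth-2-wp-formula} does not contribute to the constant term. From \cref{eq:Laurent-expansion-wp} (for $n=2$) together with the formula $\wp_n=\frac{(-1)^n}{(n-1)!}\wp^{(n-2)}$ (for $n\ge3$), the coefficient of $z^0$ in $\wp_n(z;\tau)$ equals $(-1)^nG_n(\tau)$, which is $G_n(\tau)$ for even $n$ and $0=G_n(\tau)$ for odd $n$. Hence each factor $\wp_{n_i}(z;\tau)-G_{n_i}(\tau)$ has vanishing constant term, so the entire first sum drops out. As the other two sums are independent of $z$, comparing the $z^0$-coefficients of both sides yields the exact identity, valid over $\tau\in\HH$, that the constant term of $\wp_{k_1,\dots,k_r}(z;\tau)$ equals $B(\tau)+C(\tau)$, where $B$ and $C$ are the second and third sums.

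I would then degenerate at the cusp. As $\tau\to i\infty$ the lattice points off the real line disappear, so $\wp_{k_1,\dots,k_r}(z;\tau)$ converges to the multitangent $\Psi_{k_r,\dots,k_1}(z)$ and every $\til G_{\boldsymbol a}(\tau)$ converges to $\zeta(\boldsymbol a)$. Consequently $C(\tau)$ tends to the left-hand side of the corollary, while $B(\tau)$ tends to the $n_i=0$ part of the right-hand side, correctly normalised because $B_0=1$ gives $\frac{(2\pi i)^0B_0}{0!}=1$. It remains to identify $\lim_{\tau\to i\infty}$ of the constant term of $\wp_{k_1,\dots,k_r}$, namely the constant term of $\Psi_{k_r,\dots,k_1}(z)$, with the remaining ``$n_i\ge2$'' part of the right-hand side. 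This is where the Bernoulli numbers enter: computing the Laurent expansion of $\Psi_{k_r,\dots,k_1}$ at $z=0$, the pole contributions from the lattice point at the origin are governed by the even monotangent constant terms $[z^0]\Psi_{2m}(z)=2\zeta(2m)=-\frac{(2\pi i)^{2m}B_{2m}}{(2m)!}$, and summing the surviving positive and negative lattice points into multiple zeta values with the binomial weights $\binom{n_j-1}{k_j-1}$ produces exactly the sum over even $n_i\ge2$.

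The hard part is this final computation of the multitangent's constant term: matching the signs, the binomial coefficients and the index reversals, and above all justifying that extraction of the $z^0$-coefficient commutes with the degeneration $\tau\to i\infty$ (uniform convergence of the constant term of the $q$-expansion on a punctured neighbourhood of $z=0$). Equivalently, one must establish the combinatorial multiple-zeta-value identity reconciling the Eisenstein-series expression $B(\tau)+C(\tau)$ supplied by \cref{thm:depth-2-wp-formula} with the Bernoulli-number expression; this is precisely the step where Hirose's argument, which the present proof mirrors, does the substantive work.
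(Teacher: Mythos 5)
Your preliminary reductions are all correct and run parallel to the paper's own argument: since $[z^0]\wp_n(z;\tau)=(-1)^nG_n(\tau)=G_n(\tau)$ (using $G_n=0$ for odd $n$), the first sum of \cref{thm:depth-2-wp-formula} contributes nothing to the constant term; $C(\tau)$ degenerates at the cusp to the left-hand side of the corollary and $B(\tau)$ to the $n_i=0$ part of the right-hand side via $B_0=1$; and the even Bernoulli terms do come from $2\zeta(2m)=-\frac{(2\pi i)^{2m}B_{2m}}{(2m)!}$. But the decisive step --- identifying the constant term of $\Psi_{k_r,\dots,k_1}(z)$ with the sum over even $n_i\geq 2$ --- you explicitly defer as ``the hard part,'' to be settled by a bare-hands summation over the surviving lattice points. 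That identification is not something to re-derive: it is exactly Theorem 3 of \cite{Bouillot}, the reduction of a multitangent to monotangents with multiple zeta value coefficients, carrying precisely the binomial weights $\binom{n_j-1}{k_j-1}$ and signs you need, combined with $[z^0]\Psi_n(z)=2\zeta(n)$ for even $n$ and $0$ for odd $n$ (noting $(-1)^{n_i+\cdots+n_r}=(-1)^{n_{i+1}+\cdots+n_r}$ when $n_i$ is even). Reconstituting this from scratch amounts to reproving Bouillot's theorem, which is the substantive content of his paper; as written, your proposal is incomplete at exactly the point where all the content lies, whereas the paper closes it with a single citation.

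The paper's route also sidesteps the interchange-of-limits issue you flag but do not discharge. It takes $\tau\to i\infty$ in the \emph{full} identity of \cref{thm:depth-2-wp-formula} (using $\wp_{k_1,\dots,k_r}(z;\tau)\to\Psi_{k_r,\dots,k_1}(z)$, $\wp_n(z;\tau)\to\Psi_n(z)$, $\til{G}_{\boldsymbol{k}}(\tau)\to\zeta(\boldsymbol{k})$, and $G_n(\tau)\to 2\zeta(n)$ for even $n$), obtaining an identity between functions of $z$, and then compares it with Bouillot's identity: the $\Psi_{n_i}(z)$-terms cancel and what survives is precisely the constant identity of the corollary. No coefficient is extracted before the limit, so no uniformity argument is needed. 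If you insist on your $z^0$-first variant, you must additionally justify that $[z^0]$ commutes with $\tau\to i\infty$ --- for instance via locally uniform convergence on a small circle about the origin together with Cauchy's integral formula --- which is true but is an extra obligation your sketch acknowledges without fulfilling.
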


\begin{proof}
Taking the limit $\tau\to\infty$ in \cref{thm:depth-2-wp-formula}, and using the limits
$$\wp_{k_1,\dots,k_r}(z;\tau)\to\Psi_{k_r,\dots,k_1}(z)\quad
\text{and}\quad  \til{G}_{k_1,\dots,k_r}(\tau)\to\zeta(k_1,\dots,k_r),$$
we obtain a formula expressing the multitangent function $\Psi_{k_r,\dots,k_1}(z)$ in terms of monotangent functions $\Psi_{k}(z)$ for $k\geq 2$ and multiple zeta values. Comparing this formula with the identity
\begin{align}
\Psi_{k_r,\dots,k_1}(z)
=&\sum_{i=1}^{r}\sum_{\substack{n_1,\dots,n_r\geq 2\\n_1+\cdots+n_r=k}}(-1)^{k_i+n_{i}+\cdots+n_r}\prod_{\substack{j=1\\j\neq i}}^{r}\binom{n_j-1}{k_j-1}
\zeta(n_{i-1},\dots,n_1)\zeta(n_{i+1},\dots,n_r)\Psi_{n_i}(z)
\end{align}
given in \cite[Theorem 3]{Bouillot}, we obtain the desired claim.
\end{proof}

\section{The algebra of multiple $\wp$-functions}\label{sec:algebra}
To proceed with the discussion, we define several vector spaces. For an integer $k>0$, we denote
\begin{gather}
\MES_k\ceq\Span_\Q\{\til{G}_{k_1,\dots,k_r}(\tau)\mid r\geq 1,k_1,\dots,k_r\ge2,k_1+\cdots+k_r=k\}\ ,\quad \MES_0=\Q,
\intertext{and}
\MPF_k\ceq \Span_\Q\{\wp_{k_1,\dots,k_r}(z;\tau)\mid r\geq 1,k_1,\dots,k_r\ge2,k_1+\cdots+k_r=k\} ,\quad \MPF_0=\Q.
\end{gather}
Furthermore, we define
\begin{align}
\MES\ceq\sum_{k\ge 0}\MES_k
\quad\quad\text{and}\quad\quad
\MPF\ceq\sum_{k\ge0}\MPF_k.
\end{align}
The harmonic product is defined on both $\MPF$ and $\MES$, so we have $\MES_r\cdot\MES_s\subset\MES_{r+s}$ and $\MPF_r\cdot\MPF_s\subset\MPF_{r+s}$. Moreover, we have $\displaystyle{\frac{d}{dz}}(\MPF_k)\subset\MPF_{k+1}$.

\subsection{Relations among multiple Eisenstein series from multiple $\wp$-functions}

\begin{conj}[Combining Conjecture 1 in \cite{Oko} and Theorem 1 in \cite{BK}, see also Conjecture 4.18 (2) in \cite{BIM}]\label{conj:dim-Eisen} We have
\begin{align}
\sum_{k\geq 0}\dim \MES_k X^{k} = \frac{1}{1-X^2-X^3-X^4-X^5+X^8+X^9+X^{10}+X^{11}+X^{12}}.
\end{align}
\end{conj}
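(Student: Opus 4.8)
The displayed identity is a \emph{conjecture}, and it is presented precisely as the conjunction of Conjecture~1 in \cite{Oko} and Conjecture~4.18(2) in \cite{BIM}; accordingly, my plan is not to prove it \emph{ab initio} but to explain how the stated generating function is to be extracted from those two inputs, and to isolate the one genuinely open ingredient. Abbreviate the right-hand denominator by $D(X)$. The first step is to pass to the Fourier ($q$-)expansion: writing $q=e^{2\pi i\tau}$, each $\til{G}_{k_1,\dots,k_r}(\tau)$ has a $q$-expansion whose coefficients realize it as a multiple $q$-zeta value, so that the weight-graded algebra $\mathcal{E}=\bigoplus_{k}\mathcal{E}_k$ should embed into a model of the algebra whose Hilbert series is the subject of \cite{Oko}. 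Conjecture~1 of \cite{Oko} conjecturally supplies that Hilbert series as $1/D(X)$, while Conjecture~4.18(2) of \cite{BIM} should identify the subspace cut out by the multiple Eisenstein series with exactly the weight-graded piece carrying this series (compare the discussion in \cite{BB}). Once this identification is in place, combining the two statements is a purely formal matching of generating functions, and the claimed formula drops out.

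A direct attack, not routed through the cited conjectures, would instead split into an upper and a lower bound on $\dim\mathcal{E}_k$. For the upper bound I would assemble a complete list of relations among the $\til{G}_{k_1,\dots,k_r}$ and show that in each weight $k$ the resulting quotient has dimension at most the coefficient of $X^k$ in $1/D(X)$. The relations at hand are the stuffle (harmonic) relations coming directly from the iterated-sum definition, the shuffle-type relations furnished by the Bachmann--Tasaka regularization \cite{BT}, and the shuffle-antipode relations obtained in this paper in \cref{cor:shuffle-antipode} (equivalently, the functional identity \cref{cor:antipode-p}). The expectation, matching the cited conjectures, is that these generate all relations; proving that they already span within the conjectured dimension uniformly in $k$ would require extracting a structural basis (for instance a Lyndon- or Hall-type basis) from the conjectural presentation, rather than a weight-by-weight linear-algebra check.

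The lower bound is where the main obstacle lies. Establishing $\dim\mathcal{E}_k\ge c_k$ amounts to ruling out \emph{any} relation among multiple Eisenstein series beyond those already listed, which is an independence statement of transcendence type. This is the exact analogue of the lower-bound half of Zagier's dimension conjecture for multiple zeta values: the specialization $\tau\to i\infty$ sends $\til{G}_{k_1,\dots,k_r}(\tau)\to\zeta(k_1,\dots,k_r)$, so any proof of the lower bound here would in particular have to control the (conjecturally known but unproven) dimensions of the multiple zeta value algebra. For this reason I expect the lower bound to lie beyond current techniques, and the statement to remain genuinely open. The realistic contribution of the present circle of ideas is therefore on the upper-bound side, where the new shuffle-antipode relations of \cref{cor:shuffle-antipode} together with the reduction \cref{thm:depth-2-wp-formula} supply a supply of relations obtained purely by complex-analytic means.
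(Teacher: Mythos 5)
The statement you were given is a conjecture, and the paper offers no proof of it --- it is stated purely as a combination of Conjecture~1 of \cite{Oko} and Conjecture~4.18(2) of \cite{BIM}, exactly as you recognize. Your assessment (that the identity follows formally once those cited conjectures are granted, that the genuinely open content is the lower bound of transcendence type, and that the paper's own contribution via \cref{cor:shuffle-antipode} bears only on the upper-bound side) is accurate and consistent with how the paper treats the statement, so there is nothing to correct.
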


The following table shows the number of independent relations $\#\text{rel}_{\text{conj}}$ predicted by the dimension conjecture, the number of independent relations $\#\text{rel}_{\text{anti}}$ obtained from the shuffle antipode (\cref{cor:shuffle-antipode}), after decomposing it via the harmonic product and taking its harmonic products with multiple Eisenstein series, and the difference $\#\text{rel}_{\text{conj}}-\#\text{rel}_{\text{anti}}$.
\begin{align}
\begin{array}{c|cccccccccccccccc}
\text{weight }k & 2 & 3 & 4 & 5 & 6 & 7 & 8 & 9 & 10 & 11 & 12 & 13 & 14 & 15 & 16
\\ \hline
\dim_{\text{conj}} \MES_k & 1 & 1 & 2 & 3 & 4 & 7 & 9 & 15 & 21 & 32 & 47 & 70 & 104 & 153 & 228
\\
\text{$\#\text{rel}_{\text{conj}}$} & 0 & 0 & 0 & 0 & 1 & 1 & 4 & 6 & 13 & 23 & 42 & 74 & 129 & 224 & 382
\\
\text{$\#\text{rel}_{\text{anti}}$} & 0 & 0 & 0 & 0 & 1 & 1 & 4 & 5 & 13 & 19 & 40 & 62 & 115 & 188 & 328
\\
\text{$\#\text{rel}_{\text{conj}}$}-\text{$\#\text{rel}_{\text{anti}}$}
& 0 & 0 & 0 & 0 & 0 & 0 & 0 & 1 & 0 & 4 & 2 & 12 & 14 & 36 & 54
\end{array}
\end{align}
We do not have a concrete conjecture regarding the precise nature of the relations that constitute $\text{$\#\text{rel}_{\text{conj}}$}-\text{$\#\text{rel}_{\text{anti}}$}$. Conjecturally, differentiation with respect to $\tau$ lifts a relation in $\MES_k$ to a relation in $\MES_{k+2}$, so the natural question is whether $\text{$\#\text{rel}_{\text{conj}}$}-\text{$\#\text{rel}_{\text{anti}}$}$ is entirely generated by such lifted relations. However, we have not been able to confirm this even in weight $9$.

Furthermore, it is conjectured that the relations obtained by decomposing \cref{cor:Eisen-relation-from-p} via the harmonic product and then taking harmonic products with multiple Eisenstein series are contained in the relations obtained from \cref{cor:shuffle-antipode}.

\subsection{The algebra of multiple $\wp$-functions}

If we regard the multiple $\wp$-function as an analog of the multitangent function, the following question can be posed as an analog of the result in \cite{Hirose}.
\begin{itemize}
\item Is $\MPF$ a $\MES$-algebra?
\end{itemize}
Numerical experiments suggest that the answer to this question is ``No''. In other words, it is conjectured that $\MES\cdot\MPF\not\subset\MPF$. We reformulate the question as follows.
\begin{conj}\label{conj:derivative}
For any integers $r,s\geq 0$ we have $\displaystyle{\MES_r\cdot\frac{d}{dz}\MPF_s\subset\MPF_{r+s+1}}$.
\end{conj}
To present several statements equivalent to this conjecture, we define a certain space of multiple Eisenstein series. From \cref{thm:rough-state-coeff-two}, there exist $\mathcal{G}_n(\mathds{k})\in\MES_{k-n}$ such that
$$\wp_{\mathds{k}}(z) = \mathcal{G}_0(\mathds{k}) + \sum_{n\geq 2}\mathcal{G}_n(\mathds{k})\wp_n(z).$$
Then we define
$$\MES(k,n)\ceq \Span_\Q\{\mathcal{G}_n(k_1,\dots,k_r)\mid r\geq 1,k_1,\dots,k_r\ge2,k_1+\cdots+k_r=k\}.$$
It immediately follows that $\MES(k,n)\subset\MES_{k-n}$.
\begin{prop}[cf. {\cite[Theorem 2]{Bouillot}}]\label{prop:equiv-derivative} The following statements are equivalent to \cref{conj:derivative}.
\begin{enumerate}[(1)]
\item For any integers $k\geq 0$ and $n\geq 3$, we have $\MES_{k}\cdot\wp_n(z)\subset\MPF_{k+n}$.
\item For any integer $k\geq 0$, we have $\MES_k\cdot\wp_3(z)\subset\MPF_{k+3}$.
\item For any integers $k\geq 0$ and $n\geq 3$, we have $\MES(k+n,n)=\MES_{k}$.
\item For any integer $k\geq 0$, we have $\MES(k+3,3)=\MES_{k}$.
\end{enumerate}
\end{prop}

\begin{proof}
Similar to \cite[Theorem 2]{Bouillot}, one can show the equivalence among \Cref{conj:derivative}, (1) and (2) by using the identity $\frac{d}{dz}\wp_{n}(z) = -n \wp_{n+1}(z)$. Now, we prove the equivalence (3) $\Leftrightarrow$ (4). It is trivial that (3) $\Rightarrow$ (4). Assuming (4), for every element $f\in\MES_k$, there exists an element $g\in\MPF_{k+3}$ such that its coefficient of $\wp_3$ is $f$. For any $n\geq3$, the coefficient of $\wp_n$ in $\left(\frac{d}{dz}\right)^{n-3}g\in\MPF_{k+n}$ is an integer multiple of $f$, and thus $f\in\MES(k+n,n)$. 
Finally, we prove the equivalence (2) $\Leftrightarrow$ (4). Assuming (2), we have $\MES_k\cdot\wp_3\subset\MPF_{k+3}\subset\MES(k+3,0)+\sum_{i\geq2}\MES(k+3,i)\cdot\wp_i$. Since $\{\wp_n(z)\mid n\geq2\}$ is linearly independent over $\C$, we have $\MES_k\subset\MES(k+3,3)$ and thus (2) $\Rightarrow$ (4).
Conversely, assuming (4), for any $f\in\MES_k=\MES(k+3,3)$, there exists an element $g(z)\in\MPF_{k+3}$ such that its coefficient of $\wp_3$ is $f$. Since $\wp_n(z)$ is an odd function when $n$ is odd, we have
\begin{align}\label{eq:symmetrize}
    \frac{g(z)-g(-z)}{2}=f\cdot\wp_3+\summ{i=5\\i:\mathrm{odd}}^{k+1}f_{k-i+3}\cdot\wp_i
\end{align}
for some $f_{k-i+3}\in\MES_{k-i+3}$. Note that the left-hand side of \cref{eq:symmetrize} is contained in $\MPF_{k+3}$ since $\wp_{k_1,\dots,k_r}(-z)=(-1)^{k_1,\dots,k_r}\wp_{k_r,\dots,k_1}(z)$. We prove $f\cdot\wp_3\in\MPF_{k+3}$ by induction on $k\geq0$. When $0\leq k\leq 3$, the sum on the right-hand side of \cref{eq:symmetrize} vanishes and thus $f\cdot\wp_3\in\MPF_{k+3}$. When $k>3$, by the inductive hypothesis, $f_{k-i+3}\cdot\wp_i\in\left(\frac{d}{dz}\right)^{i-3}\MES_{k-i+3}\cdot\wp_3$ is contained in $\left(\frac{d}{dz}\right)^{i-3}\MPF_{k-i+6}\subset\MPF_{k+3}$, which completes the proof.
\end{proof}

Based on several numerical calculations, it is conjectured that \cref{conj:derivative} holds in the formal setting. We describe this in more detail. Let $W$ be the set of words generated by $\{z_{k}\mid k\geq 2\}$, and let $\mathfrak{H}^2$ be the $\Q$-vector space generated by W. Let $\mathfrak{H}^2_*$ be the $\Q$-algebra whose commutative product $*$ is defined by
$$w*1=1*w=w\quad \text{and}\quad wz_r*vz_s = (w*vz_s)z_r + (wz_r*v)z_s + (w*v)z_{r+s}$$
for all words $w,v\in W$ and all integers $r,s\geq 2$. We define
\begin{align}
\mathfrak{Z}_a(k_1,\dots,k_r) \ceq \sum_{i=1}^{r}\sum_{\substack{n_1,\dots,n_r\geq 2\\n_1+\cdots+n_r=k\\n_i=a}}(-1)^{k_i+n_{i}+\cdots+n_r}\prod_{\substack{j=1\\j\neq i}}^{r}\binom{n_j-1}{k_j-1}z_{n_{i-1}}\cdots z_{n_1} * z_{n_{i+1}}\cdots z_{n_r},\quad (k_1+\cdots+k_r=k).
\end{align}
Then, we conjecture that for any integers $k \geq 0$ and $n \geq 3$, we have
\begin{align}
\Span_{\Q}\{&\mathfrak{Z}_n(k_1,\dots,k_r) \mid k_1,\dots,k_r\geq 2,\,k_1+\dots+k_r=k\}
\\
&= \Span_{\Q}\{z_{k_1}\dots z_{k_r} \mid k_1,\dots,k_r\geq 2,\,k_1+\dots+k_r=k-n\}.
\end{align}
The map $\mathfrak{H}^2_*\to \MES\;;\;z_{k_1}\cdots z_{k_r} \mapsto G_{k_1,\dots,k_r}(\tau)$, is a homomorphism that sends the left-hand side of the above equation to $\MES(k,n)$ and the right-hand side to $\MES_{k-n}$. Hence, this conjecture implies (3) in \cref{prop:equiv-derivative}.

\begin{conj}\label{conj:cusp-form}
Let $\MES_k=\{0\}$ for a negative integer $k$, and let $\Delta(\tau)$ be the cusp form of weight 12 on $\SL_2(\Z)$, defined by
$$\Delta(\tau) \ceq (2\pi)^{12}\prod_{n=1}^{\infty}(1-q^n)^{24}
\quad (q=e^{2\pi i\tau}).$$
Then we have the following.
\begin{enumerate}[(1)]
\item $\MPF_k\cap\MES_k=\Delta\cdot\MES_{k-12}$.
\item $\MPF_k\cap (\MES_{k-2}\cdot\wp_2(z))=\Delta\cdot\MES_{k-14}\cdot\wp_2(z)$.
\end{enumerate}
\end{conj}

\begin{example} We have
$$\Delta = -320(7\wp_{3,9}+21\wp_{4,8}+37\wp_{5,7}+45\wp_{6,6}+37\wp_{7,5}+21\wp_{8,4}+7\wp_{9,3}-350\wp_{3,3,3,3}).$$
\end{example}

From \cref{conj:derivative,prop:equiv-derivative}, we are led to the conjecture that $\MES(k,n)=\MES_{k-n}$ for $n\geq 3$. For $\MES(k,0)$ and $\MES(k,2)$ it is unclear how the spaces can be described, however, if \cref{conj:derivative,conj:cusp-form} are true, there exist the following isomorphisms of vector spaces.
\begin{align}
\quotient{\MPF_k}{\Delta\cdot\MES_{k-12}} \overset{?}{\cong} \MES(k,2) \oplus \bigoplus_{n\geq 0}^{k-3}\MES_{n}
\quad\text{and}\quad
\quotient{\MPF_k}{\Delta\cdot\MES_{k-14}\cdot\wp_2} \overset{?}{\cong} \MES(k,0) \oplus \bigoplus_{n\geq 0}^{k-3}\MES_{n}.
\end{align}
Furthermore, we have the following dimension formula for $\MPF_k$.
\begin{align}
\dim\MPF_k \overset{?}{=} \dim\MES_{k-12} + \dim\MES(k,2) + \sum_{n\geq 0}^{k-3}\dim\MES_{n}
\overset{?}{=} \dim\MES(k,0) + \dim\MES_{k-14} + \sum_{n\geq 0}^{k-3}\dim\MES_{n}.
\end{align}

Since the dimension conjecture for $\MES_k$ is already given in \cref{conj:dim-Eisen}, the dimension conjecture for $\MPF_k$ can be formulated once the dimension conjecture for $\MES(k,0)$ or $\MES(k,2)$ is obtained. However, at present, no such conjecture is known. Below, we present the computed values of $\dim\MPF_k$ under the assumption of the conjecture in \cite{BKM}, which describes all relations among multiple Eisenstein series.

$$\begin{array}{l|cccccccccccccccccc}
\text{weight }k & 2 & 3 & 4 & 5 & 6 & 7 & 8 & 9 & 10 & 11 & 12 & 13 & 14 & 15 & 16
\\ \hline
\dim \MPF_k & 1 & 1 & 2 & 3 & 5 & 7 & 12 & 17 & 27 & 39 & 59 & 88 & 130 & 194 & 286
\\ \hdashline
\dim\MES_k & 1 & 1 & 2 & 3 & 4 & 7 & 9 & 15 & 21 & 32 & 47 & 70 & 104 & 153 & 228
\\
\dim\MES(k,0) & 0 & 0 & 1 & 1 & 2 & 2 & 4 & 5 & 8 & 11 & 16 & 24 & 33 & 51 & 72
\\
\dim\MES(k+2,2) & 1 & 1 & 2 & 2 & 4 & 5 & 8 & 11 & 15 & 24 & 33 & 50 & 71 & 104 & 153
\end{array}$$

\begin{prob}
What sequence characterizes $\dim \MPF_k$?
\end{prob}

\appendix



\setcounter{thm}{0}
\renewcommand{\thethm}{\Alph{section}.\arabic{thm}}

\section{Some properties of $\wp_{2,\dots,2}(z;\tau)$}\label{appendix}

In this appendix, we give some additional properties of $\wp_{2,\dots,2}(z;\tau)$.
To state the Fourier expansion of $\wp_{2,\dots,2}(z;\tau)$, we define $g_{k_1,\dots,k_r}(z;\tau)$ for $k_1,\dots,k_r\ge2$ by
\begin{align}
    g_{k_1,\dots,k_r}(z;\tau)&\ceq\sum_{\substack{0<m_1<\cdots<m_r\\n_1,\dots,n_r\in\Z}}\frac{1}{(z+m_1\tau+n_1)^{k_1}\cdots(z+m_r\tau+n_r)^{k_r}}.
\end{align}

The Fourier expansion of these functions are given as follows.

\begin{lem}
    For $\Im(z)>0$, we have
    \begin{align}
        g_{k_1,\dots,k_r}(z;\tau)=\frac{(-2\pi i)^{k_1+\cdots+k_r}}{(k_1-1)!\cdots(k_r-1)!}\sum_{\substack{0<m_1<\cdots<m_r\\n_1,\dots,n_r>0}}n_1^{k_1-1}\cdots n_r^{k_r-1}\xi^{n_1+\cdots+n_r}q^{m_1n_1+\cdots+m_rn_r},
    \end{align}
    where $\xi=e^{2\pi iz}$ and $q=e^{2\pi i\tau}$.
\end{lem}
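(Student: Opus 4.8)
The plan is to reduce everything to the classical Lipschitz summation formula applied one coordinate at a time. First I would fix a tuple $(m_1,\dots,m_r)$ with $0<m_1<\cdots<m_r$ and note that, since $n_1,\dots,n_r$ range independently over $\Z$, the inner sum factors completely:
\begin{align*}
\sum_{n_1,\dots,n_r\in\Z}\prod_{s=1}^{r}\frac{1}{(z+m_s\tau+n_s)^{k_s}}
=\prod_{s=1}^{r}\left(\sum_{n_s\in\Z}\frac{1}{(z+m_s\tau+n_s)^{k_s}}\right),
\end{align*}
where each factor converges absolutely because $k_s\ge 2$. To each factor I would then apply the Lipschitz formula: for $k\ge 2$ and $\Im(w)>0$,
\begin{align*}
\sum_{n\in\Z}\frac{1}{(w+n)^k}=\frac{(-2\pi i)^{k}}{(k-1)!}\sum_{n\ge 1}n^{k-1}e^{2\pi i n w}.
\end{align*}
Taking $w=z+m_s\tau$, whose imaginary part $\Im(z)+m_s\Im(\tau)$ is positive since $\Im(z)>0$, $m_s\ge 1$ and $\tau\in\HH$, and using $e^{2\pi i n_s w}=\xi^{n_s}q^{m_s n_s}$, each factor becomes $\frac{(-2\pi i)^{k_s}}{(k_s-1)!}\sum_{n_s\ge 1}n_s^{k_s-1}\xi^{n_s}q^{m_s n_s}$. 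Multiplying over $s$ and summing over $0<m_1<\cdots<m_r$ then produces precisely the asserted expression.

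The step I expect to be the main obstacle is justifying these manipulations, because the defining series of $g_{k_1,\dots,k_r}$ is only conditionally convergent in the iterated sense: already for $r=1$, $k_1=2$ the sum of absolute values $\sum_{m>0}\sum_{n\in\Z}|z+m\tau+n|^{-2}$ grows like $\sum_{m>0}1/m$ and diverges. Consequently the factorization and the substitution of the Lipschitz formula must be read as operations carried out on the absolutely convergent inner $n$-sums, with the outer sum over the ordered $m$-tuple performed afterwards, matching the order of summation implicit in the definition of $g_{k_1,\dots,k_r}$.

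What rescues the argument is that the resulting $q$-series is genuinely absolutely convergent. Writing $a=|\xi|=e^{-2\pi\Im(z)}<1$ and $b=|q|=e^{-2\pi\Im(\tau)}<1$, and dropping the ordering constraint on the $m_s$ to get an upper bound, the sum of absolute values of the terms is controlled, up to the fixed constant $(2\pi)^{k_1+\cdots+k_r}/\prod_s(k_s-1)!$, by
\begin{align*}
\prod_{s=1}^{r}\left(\sum_{m_s\ge 1}\sum_{n_s\ge 1}n_s^{k_s-1}a^{n_s}b^{m_s n_s}\right)
=\prod_{s=1}^{r}\left(\sum_{n_s\ge 1}n_s^{k_s-1}a^{n_s}\frac{b^{n_s}}{1-b^{n_s}}\right)
\le\prod_{s=1}^{r}\left(\frac{b}{1-b}\sum_{n_s\ge 1}n_s^{k_s-1}a^{n_s}\right)<\infty.
\end{align*}
This bound licenses freely reordering and recombining the $r$ geometric-type series into the single multiple sum over $(m_s,n_s)$ displayed in the statement, and confirms that this multiple sum reproduces the value of $g_{k_1,\dots,k_r}$. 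I would therefore organize the write-up as: factor the inner sum, apply Lipschitz coordinatewise, and invoke the displayed estimate to pass to the absolutely convergent multiple series.
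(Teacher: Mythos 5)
Your proposal is correct and follows essentially the same route as the paper, whose proof is a one-line appeal to the Lipschitz summation formula applied coordinatewise to $w=z+m_s\tau$ (with $\Im(w)>0$); your added justification of the iterated-versus-absolute convergence is a careful elaboration of that same argument, not a different method. Incidentally, your sign convention $\frac{(-2\pi i)^k}{(k-1)!}$ is the correct one matching the lemma's statement, whereas the paper's proof writes $\frac{(2\pi i)^k}{(k-1)!}$, a harmless typo there.
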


\begin{proof}
    This follows from the Lipschitz summation formula
    \begin{align}
        \sum_{n\in\Z}\frac{1}{(z+n)^k}=\frac{(-2\pi i)^k}{(k-1)!}\left(\frac{\delta_{k,1}}{2}+\sum_{n>0}n^{k-1}\xi^n\right)
    \end{align}
    for $k\ge1$ and $\Im(z)>0$.
\end{proof}

\begin{prop}
    For $k_1,\dots,k_r\ge2$ and $\Im(z)>0$, we have
    \begin{align}
        &\til{\wp}_{k_1,\dots,k_r}(-z;\tau)=(-1)^{k_1+\cdots+k_r}\sum_{h=0}^{r-1}\sum_{1\le{}t_1<\cdots<t_h\le{}r}\zeta^{(z)}(k_1,\dots,k_{t_1-1})\sum_{\substack{t_1\le{q_1}<t_2\\\vdots\\t_h\le{}q_h<r+1}}\sum_{\substack{n_{t_j}+\cdots+n_{t_{j+1}-1}\\=k_{t_j}+\cdots+k_{t_{j+1}-1}\\(1\le{}j\le{}h),n_\bullet\ge2}}\\
        &\times\prod_{j=1}^h\bigg\{(-1)^{l_j}\prod_{\substack{p=t_j\\p\ne{}q_j}}^{t_{j+1}-1}\binom{n_p-1}{k_p-1}\zeta(n_{q_j-1},\dots,n_{t_j})\zeta(n_{q_j+1},\dots,n_{t_{j+1}-1})\bigg\}g_{n_{q_1},\dots,n_{q_h}}(z;\tau),
    \end{align}
    where $l_j=n_{t_j}+\cdots+n_{t_{j+1}-1}+n_{q_j}+k_{q_j+1}+\cdots+k_{q_{j+1}-1}$, $t_{h+1}=r+1$ and $\zeta^{(z)}(k_1,\dots,k_r)$ denote the \emph{Hurwitz multiple zeta values} defined by
    $$\zeta^{(z)}(k_1,\dots,k_r)\ceq\sum_{0<n_1<\cdots<n_r}\frac{1}{(z+n_1)^{k_1}\cdots(z+n_r)^{k_r}}.$$
\end{prop}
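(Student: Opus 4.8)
The plan is to expand the iterated sum defining $\til{\wp}_{k_1,\dots,k_r}(-z;\tau)$ and reorganize it according to the pattern of the $\tau$-coordinates of the lattice points. Writing each $w_s=m_s\tau+n_s$ and using $(-z-w_s)^{k_s}=(-1)^{k_s}(z+w_s)^{k_s}$ accounts for the global prefactor $(-1)^{k_1+\cdots+k_r}$. The ordering $0\prec w_1\prec\cdots\prec w_r$ forces $0\le m_1\le\cdots\le m_r$, with the $n$'s strictly increasing whenever consecutive $m$'s coincide; so I would first split the index set $\{1,\dots,r\}$ into a leading block on which $m=0$ (indices $1,\dots,t_1-1$, possibly empty) followed by $h$ maximal blocks $[t_j,t_{j+1}-1]$ on which $m$ takes a constant positive value, these values being strictly increasing across blocks. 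Summing over all such shapes is exactly the outer sums $\sum_h\sum_{t_1<\cdots<t_h}$ of the statement, with the convention $t_{h+1}=r+1$.

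Next I would identify each piece of a fixed shape. The leading $m=0$ block contributes $\sum_{0<n_1<\cdots<n_{t_1-1}}\prod(z+n_p)^{-k_p}=\zeta^{(z)}(k_1,\dots,k_{t_1-1})$, the Hurwitz multiple zeta value. On the $j$-th positive block the $\tau$-coordinate is frozen at a single value $m_j>0$ while $n_{t_j}<\cdots<n_{t_{j+1}-1}$ range over all integers; this inner sum is a multitangent function evaluated at $z+m_j\tau$, namely $\Psi_{k_{t_j},\dots,k_{t_{j+1}-1}}(z+m_j\tau)$ up to sign and the index-reversal dictated by reading the $\prec$-chain (the same reversal already visible in \cref{lem:p-to-p-tilde-appendix}).

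I would then apply Bouillot's reduction of a multitangent into monotangents --- Theorem 3 of \cite{Bouillot}, the very identity quoted in the proof of \cref{cor:MZVs-relations} --- to each block separately. This replaces the $j$-th block by $\sum_{t_j\le q_j<t_{j+1}}$ of a monotangent $\Psi_{n_{q_j}}(z+m_j\tau)$ with coefficient built from the two multiple zeta values $\zeta(n_{q_j-1},\dots,n_{t_j})$ and $\zeta(n_{q_j+1},\dots,n_{t_{j+1}-1})$, the binomials $\binom{n_p-1}{k_p-1}$ for $p\ne q_j$, and a sign, subject to the constraint $n_{t_j}+\cdots+n_{t_{j+1}-1}=k_{t_j}+\cdots+k_{t_{j+1}-1}$ with $n_\bullet\ge2$. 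Finally, since the $g$-function is by definition the nested monotangent sum $g_{n_{q_1},\dots,n_{q_h}}(z;\tau)=\sum_{0<m_1<\cdots<m_h}\prod_{j=1}^h\Psi_{n_{q_j}}(z+m_j\tau)$, I resum the product of monotangents over $0<m_1<\cdots<m_h$ to collapse the $m$-variables into the single factor $g_{n_{q_1},\dots,n_{q_h}}(z;\tau)$. Collecting the Hurwitz factor, the per-block MZV and binomial data, and the accumulated signs then yields the asserted formula.

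The main obstacle is twofold. First is the sign computation: the prefactor $(-1)^{\sum k_p}$, the signs emitted by Bouillot's reduction, and the reversal of indices forced by reading the $\prec$-chain from the outside in must be tracked carefully to reproduce the stated exponent $l_j=n_{t_j}+\cdots+n_{t_{j+1}-1}+n_{q_j}+k_{q_j+1}+\cdots+k_{q_{j+1}-1}$; I expect this to be the most delicate bookkeeping. Second is the analytic justification: the defining $M,N$ limits are only conditionally convergent in the $\prec$-order, so I must argue that the block decomposition, the termwise application of Bouillot's formula, and the resummation over $m_1<\cdots<m_h$ are all legitimate --- leaning on the absolute-convergence estimates established in \cref{prop:converge} together with the hypothesis $\Im(z)>0$, which guarantees $\Im(z+m_j\tau)>0$ and places each monotangent in the regime where the Lipschitz/Fourier form of the preceding lemma applies.
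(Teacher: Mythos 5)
Your proposal is correct and is essentially the paper's own proof: the paper simply cites ``the same manipulations in Bachmann--Tasaka \cite{BT}'' for the Fourier expansion of multiple Eisenstein series, and those manipulations are exactly what you spell out --- splitting the $\prec$-ordered sum into the $m=0$ block (giving the Hurwitz factor) and maximal constant-$m$ blocks (giving multitangents), reducing each multitangent to monotangents via Theorem 3 of \cite{Bouillot}, and resumming over $0<m_1<\cdots<m_h$ into the functions $g_{n_{q_1},\dots,n_{q_h}}(z;\tau)$. Your version is in fact more explicit than the paper's one-line citation, and your flagged concerns (sign bookkeeping and interchange of conditionally convergent limits) are the genuine points of care, handled in the cited source and by the estimates of \cref{prop:converge}.
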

\begin{proof}
    This follows from the same manipulations in Bachmann--Tasaka \cite{BT} that one obtains the Fourier expansion of the multiple Eisenstein series. The calculation is achieved by decomposing the defining series for $\tilde{\wp}_{k_1,\dots,k_r}$ into $2^r$ terms based on the ordering of the running indices, and applying partial fraction decomposition and Bouillot's reduction formula for the multitangent function (\cite{Bouillot}).
\end{proof}

By \Cref{thm:wp-coeff-two-index}, $\wp_{\{2\}^r}(z;\tau)$ can be written as $f_r(\tau)\wp_2(z;\tau)+g_r(\tau)$, where $f_r(\tau)$ and $g_r(\tau)$ are quasi-modular forms expressed via partition Eisenstein traces. The Weierstrass $\wp$-function is known as a meromorphic Jacobi form, and therefore, our $\wp_{\{2\}^r}(z;\tau)$ is an example of a meromorphic quasi-modular Jacobi form, defined in \cite{LLLW}. Recent studies (\cite{AGO,LLLW}) have shown that both meromorphic quasi-modular Jacobi forms and partition Eisenstein traces are related to a elliptic genus. Although it remains unclear whether our $\wp_{\{2\}^r}(z;\tau)$ has a direct connection to them, we summarize its basic properties, specifically, its Fourier expansion and modular transformation low, below.

\begin{prop}
For any $r>0$, we have
\begin{align}
    \wp_{\{2\}^r}(z;\tau)=\sum_{0\le{}i\le{}j\le{}r}c_{r,j}\;g_{\{2\}^i}(z;\tau)\left(g_{\{2\}^{j-i}}(-z;\tau)+\frac{(2\pi i)^2\xi}{(1-\xi)^2}g_{\{2\}^{j-i-1}}(-z;\tau)\right),
\end{align}
where $c_{r,j}=\frac{(-1)^{r-j}2^j}{(2\pi i)^{2r-2j}(2r)!}\sum_{\substack{m_1+\cdots+m_j=r\\m_1,\dots,m_j>0}}\binom{2r}{2m_1,\dots,2m_j}$.
\end{prop}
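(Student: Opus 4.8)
The plan is to compute the Fourier expansion of $\wp_{\{2\}^r}(z;\tau)$ directly from its defining iterated sum, reorganizing the lattice sum according to the sign pattern of the imaginary parts of the lattice points. Recall that $\wp_{\{2\}^r}(z;\tau)$ is built from summing over $w_1 \prec \cdots \prec w_r$ in $L_\tau$, where the order $\prec$ compares the $\tau$-coefficient first. First I would split the full sum according to how many of the lattice points $w_s = m_s\tau + n_s$ have $m_s > 0$, how many have $m_s = 0$, and how many have $m_s < 0$. The points with positive $\tau$-coefficient contribute a factor that, after reflecting $z \mapsto -z$ if needed, is exactly a $g_{\{2\}^i}(-z;\tau)$ term (since $g$ sums over $0 < m_1 < \cdots < m_r$); the points with negative $\tau$-coefficient contribute a $g_{\{2\}^{j-i}}(z;\tau)$ type factor after the reflection $m_s \mapsto -m_s$; and the points on the line $m_s = 0$ (i.e.\ $w_s \in \Z$) contribute a multitangent/cotangent factor coming from the one-dimensional sum over $\Z$.

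The key computational input is the single-variable identity for the purely real lattice line. For the block of indices with $m_s = 0$, one is summing $\prod 1/(z+n_s)^2$ over strictly ordered integers $n_1 < \cdots < n_\ell$, which is a multitangent value $\Psi_{\{2\}^\ell}(z)$; after invoking the Lipschitz formula (as in the preceding \cref{lem} on $g_{k_1,\dots,k_r}$) these assemble into the elementary function $\frac{(2\pi i)^2 \xi}{(1-\xi)^2}$, which is exactly $\sum_{n\in\Z} 1/(z+n)^2$ up to the normalization, together with its shifted versions giving the $g_{\{2\}^{j-i-1}}(-z;\tau)$ correction term in the stated formula. The combinatorial coefficient $c_{r,j}$ then arises from collecting the contributions: each of the $j$ ``$\tau$-levels'' (distinct values of $m_s$) carries a block of consecutive indices of some positive length $m_1,\dots,m_j$ summing to $r$, and the factor $\binom{2r}{2m_1,\dots,2m_j}$ with the $(2r)!$ and powers of $2$ and $(2\pi i)$ records how the $(n_s+1)$-type weights and the Bernoulli-style normalizations from the Lipschitz formula distribute across these blocks.

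Concretely, the steps in order are: (i) decompose the defining sum of $\wp_{\{2\}^r}$ by the sign pattern $(m_s < 0,\ m_s = 0,\ m_s > 0)$, noting that the ordering $\prec$ forces the negative-level points to come first, then the real line, then the positive-level points; (ii) identify the positive-level and negative-level blocks with $g_{\{2\}^i}(-z;\tau)$ and $g_{\{2\}^{j-i}}(z;\tau)$ respectively, using the reflection symmetry and the double periodicity established in \cref{lem:doubly-multi-p}; (iii) evaluate the real-line block via the Lipschitz summation formula to produce the $\frac{(2\pi i)^2\xi}{(1-\xi)^2}$ factor and its shift; and (iv) track the multinomial bookkeeping to extract $c_{r,j}$. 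A clean way to organize (iv) is to pass through the generating-function identity of \cref{thm:genelating-function} with $h = 2$, which expresses $\sum_r (-1)^r \wp_{\{2\}^r}(z) Y^{2r}$ as a product of $\sigma$-functions; matching Fourier coefficients there will furnish the multinomial sum $\sum_{m_1+\cdots+m_j = r} \binom{2r}{2m_1,\dots,2m_j}$ automatically.

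The hard part will be step (iii)–(iv): rigorously justifying the interchange of the $M \to \infty$ and $N \to \infty$ limits with the Lipschitz expansion when lattice points lie on the real axis (the $k_r = 2$ borderline case, where absolute convergence fails and one must use the telescoping trick from \cref{prop:converge}), and then correctly bookkeeping the normalization constants so that the powers of $2$, the factor $(2\pi i)^{2r-2j}$, and the $(2r)!$ in the denominator all emerge with the right signs. I expect the sign $(-1)^{r-j}$ and the interplay between the reflection $z \mapsto -z$ on the two halves of the lattice to be the most error-prone bookkeeping, and I would verify the final coefficient $c_{r,j}$ against the low-weight cases $r = 1, 2$ where $\wp_2$ and $\wp_{2,2}$ have already been computed explicitly in \Cref{sec:explicit-formula}.
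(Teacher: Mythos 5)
Your plan contains a genuine gap at step (ii), and it sits exactly where the paper's proof does its real work. Decomposing the sum defining $\wp_{\{2\}^r}$ by the sign of the $\tau$-coefficients does \emph{not} identify the positive-level (resp.\ negative-level) part with $g_{\{2\}^i}(z;\tau)$ (resp.\ $g_{\{2\}^{j-i}}(-z;\tau)$): several consecutive $w_s$ may share the same level $m$, so a level occupied by $l$ of the $w_s$ contributes a multitangent block $\Psi_{\{2\}^{l}}(z+m\tau)$, whereas $g_{\{2\}^i}$ carries exactly one quadratic factor per level. What the level decomposition actually yields (this is the $\wp^w$ decomposition of \Cref{appendix}, cf.\ \cref{eq:pw-with-word}) is
\begin{equation*}
\wp_{\{2\}^r}(-z;\tau)=\sum_{h=1}^{r}\sum_{\substack{l_1+\cdots+l_h=r\\ l_1,\dots,l_h>0}}\sum_{m_1<\cdots<m_h}\Psi_{\{2\}^{l_1}}(z+m_1\tau)\cdots\Psi_{\{2\}^{l_h}}(z+m_h\tau),
\end{equation*}
and the missing key step is the depth reduction $\Psi_{\{2\}^{l}}(z)=\frac{2^{2l-1}\pi^{2l-2}}{(2l)!}\Psi_2(z)$, which the paper obtains from Bouillot's Theorem 1 together with the explicit values $\zeta(\{2\}^n)=\pi^{2n}/(2n+1)!$ (equivalently, from the $\tau\to i\infty$ degeneration of \cref{thm:genelating-function}, namely $\sum_{l\ge0}(-1)^l\Psi_{\{2\}^l}(z)Y^{2l}=1-\pi^{-2}\sin^2(\pi Y)\Psi_2(z)$). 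Only after this reduction does each level carry a single factor $\Psi_2(z+m\tau)$, so that splitting the levels into $m<0$, $m=0$, $m>0$ and applying Lipschitz produces $g_{\{2\}^i}(z)$, the middle term $\frac{(2\pi i)^2\xi}{(1-\xi)^2}$, and $g_{\{2\}^{j-i}}(-z)$. In particular, your attribution of the multinomial coefficient to ``Lipschitz normalizations'' is incorrect: it is exactly the product of the reduction constants,
\begin{equation*}
\sum_{\substack{l_1+\cdots+l_j=r\\ l_1,\dots,l_j>0}}\prod_{i=1}^{j}\frac{2^{2l_i-1}\pi^{2l_i-2}}{(2l_i)!}
=\frac{2^{2r-j}\pi^{2r-2j}}{(2r)!}\sum_{\substack{l_1+\cdots+l_j=r\\ l_1,\dots,l_j>0}}\binom{2r}{2l_1,\dots,2l_j},
\end{equation*}
while Lipschitz contributes nothing beyond converting the surviving monotangents into $q$-series.

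Two smaller points. Your fallback in (iv) via \cref{thm:genelating-function} is only a gesture: extracting the bipartite structure $g_{\{2\}^i}(z)\,g_{\{2\}^{j-i}}(-z)$ from the $q$-expansion of $\sigma(z+Y)\sigma(z-Y)/\sigma(z)^2$ amounts to redoing the level decomposition and is not ``automatic''. Your convergence worries in (iii)--(iv), while legitimate, are disposed of once and for all by the decomposition established in \Cref{appendix}, which is why the paper's proof of this proposition is essentially two lines. On the other hand, your proposed sanity check at $r=1,2$ is well chosen: carrying it out (e.g.\ $\Psi_{2,2}=\frac{\pi^2}{3}\Psi_2$ forces $c_{2,1}=\frac{\pi^2}{3}$) shows that the factor $(2\pi i)^{2r-2j}$ in the displayed $c_{r,j}$ must sit in the numerator rather than the denominator, in agreement with the coefficient $\pi^{2r-2h}\cdot 2^{2r-h}/((2l_1)!\cdots(2l_h)!)$ appearing in the paper's own proof; so the statement as printed contains a misprint that your check would have caught.
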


\begin{proof}
    From the results of Bouillot (\cite{Bouillot}, Theorem 1) and the explicit expressions for $\zeta(2,2,\dots,2)$, we have
    \begin{align}
        \wp_{\{2\}^r}(-z;\tau)&=\sum_{h=1}^r\sum_{\substack{l_1+\cdots+l_h=r\\l_1,\dots,l_h>0}}\sum_{m_1<\cdots<m_h}\Psi_{\{2\}^{l_1}}(z+m_1\tau)\cdots\Psi_{\{2\}^{l_h}}(z+m_h\tau)\\
        &=\sum_{h=1}^r\left(\sum_{m_1<\cdots<m_h}\prod_{i=1}^h\Psi_2(z+m_i)\right)\left(\sum_{\substack{l_1+\cdots+l_h=r\\l_1,\dots,l_h>0}}\frac{\pi^{2r-2h}\cdot2^{2r-h}}{(2l_1)!\cdots(2l_h)!}\right).
    \end{align}
    From the Lipschitz summation formula, we have the conclusion.
\end{proof}

\begin{lem}\label{lem:sigma modular}
    For $\begin{pmatrix}a&b\\c&d\end{pmatrix}\in\mathrm{SL}_2(\Z)$, we have $$\sigma\left(\frac{z}{c\tau+d};\frac{a\tau+b}{c\tau+d}\right)=\frac{1}{c\tau+d}\exp\left(\frac{\pi icz^2}{c\tau+d}\right)\sigma(z;\tau).$$
\end{lem}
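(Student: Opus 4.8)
The plan is to prove the identity by passing to the second logarithmic derivative, where the $\sigma$-function is controlled by the tractable function $\wp_2=-\frac{d^2}{dz^2}\mathcal{P}$, and then to pin down the resulting integration constants using the oddness of $\sigma$ and its behavior near $z=0$. Abbreviate $\gamma\tau\ceq\frac{a\tau+b}{c\tau+d}$ and set $F(z)\ceq\sigma\!\left(\frac{z}{c\tau+d};\gamma\tau\right)$ and $G(z)\ceq\frac{1}{c\tau+d}\exp\!\left(\frac{\pi icz^2}{c\tau+d}\right)\sigma(z;\tau)$; the goal is to show $F=G$.

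First I would record the transformation laws that feed the computation. Since the matrix has determinant $1$, the lattice transforms by $L_{\gamma\tau}=(c\tau+d)^{-1}L_\tau$, and the homogeneity $\wp(\lambda z;\lambda L)=\lambda^{-2}\wp(z;L)$ gives the weight-$2$ law
\[
\wp\!\left(\tfrac{z}{c\tau+d};\gamma\tau\right)=(c\tau+d)^2\wp(z;\tau).
\]
The essential input is the quasi-modular transformation of the weight-$2$ Eisenstein series,
\[
G_2(\gamma\tau)=(c\tau+d)^2G_2(\tau)-2\pi ic(c\tau+d),
\]
whose anomalous term $-2\pi ic(c\tau+d)$ is exactly what generates the exponential prefactor. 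Combining these with $\wp_2=\wp+G_2$ from \cref{eq:Laurent-expansion-wp} yields $\wp_2(\tfrac{z}{c\tau+d};\gamma\tau)=(c\tau+d)^2\wp_2(z;\tau)-2\pi ic(c\tau+d)$.

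Next I would compute the two second logarithmic derivatives. Using $-\frac{d^2}{dz^2}\log\sigma=\wp_2$ together with the chain rule, the factor $(c\tau+d)^{-2}$ cancels the weight-$2$ growth and leaves the anomaly:
\[
\frac{d^2}{dz^2}\log F(z)=-\wp_2(z;\tau)+\frac{2\pi ic}{c\tau+d},
\]
while differentiating $\frac{\pi icz^2}{c\tau+d}$ twice gives $\frac{2\pi ic}{c\tau+d}$, so that $\frac{d^2}{dz^2}\log G(z)=-\wp_2(z;\tau)+\frac{2\pi ic}{c\tau+d}$ as well. The simple zeros of $F$ and of $G$ both occur exactly on $L_\tau$ and cancel, so $F/G$ is a nonvanishing entire function; hence $\log(F/G)$ is entire with vanishing second derivative, i.e.\ affine, and $F(z)=G(z)e^{\alpha z+\beta}$ for constants $\alpha,\beta$ depending only on $\tau$. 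To finish I would note that the exponential factor in $G$ is even while $\sigma$ is odd, so both $F$ and $G$ are odd and $F/G$ is even, forcing $\alpha=0$; and near $z=0$ both have the expansion $\frac{z}{c\tau+d}+O(z^3)$ (because $\sigma(w;\cdot)=w+O(w^3)$), forcing $e^{\beta}=1$. Therefore $F=G$.

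The main obstacle is bookkeeping the $G_2$-anomaly correctly: the modified $\sigma$ of \cref{eq:def-sigma} carries the factor $\exp(-\tfrac12 G_2z^2)$, and it is precisely the interplay between this factor, the weight-$2$ growth $(c\tau+d)^2$, and the additive anomaly $-2\pi ic(c\tau+d)$ that must combine to leave exactly $\frac{2\pi ic}{c\tau+d}$ after division by $(c\tau+d)^2$. An equivalent and slightly shorter route avoids logarithmic derivatives altogether: write $\sigma(z;\tau)=\exp(-\tfrac12 G_2(\tau)z^2)\,\sigma_{\mathrm{cl}}(z;\tau)$ in terms of the classical unmodified product $\sigma_{\mathrm{cl}}$, apply its clean homogeneity $\sigma_{\mathrm{cl}}(\lambda z;\lambda L)=\lambda\,\sigma_{\mathrm{cl}}(z;L)$ with $\lambda=(c\tau+d)^{-1}$ and $L=L_\tau$, and substitute the $G_2$ law directly; the same anomaly computation then reappears as the sole nontrivial step, and the exponential factor emerges immediately from $-\tfrac12 G_2(\gamma\tau)\frac{z^2}{(c\tau+d)^2}=-\tfrac12 G_2(\tau)z^2+\frac{\pi icz^2}{c\tau+d}$.
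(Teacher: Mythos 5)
Your proposal is correct, and it actually contains two proofs. The paper's own justification is a one-liner --- ``this follows from the modular transformations of the classical $\sigma$-function and the Eisenstein series of weight $2$'' --- which is precisely your concluding ``shorter route'': write the modified $\sigma$ of \cref{eq:def-sigma} as $\exp(-\tfrac12 G_2(\tau)z^2)\,\sigma_{\mathrm{cl}}(z;L_\tau)$, use $L_{\gamma\tau}=(c\tau+d)^{-1}L_\tau$ (here $\det=1$ is used) together with the homogeneity $\sigma_{\mathrm{cl}}(\lambda z;\lambda L)=\lambda\,\sigma_{\mathrm{cl}}(z;L)$, and absorb the anomaly via $-\tfrac12 G_2(\gamma\tau)\tfrac{z^2}{(c\tau+d)^2}=-\tfrac12 G_2(\tau)z^2+\tfrac{\pi i c z^2}{c\tau+d}$. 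Your primary route is genuinely different in mechanism: you avoid the classical $\sigma$ entirely and instead match second logarithmic derivatives through $\wp_2=-\tfrac{d^2}{dz^2}\log\sigma$ and $\wp_2(\tfrac{z}{c\tau+d};\gamma\tau)=(c\tau+d)^2\wp_2(z;\tau)-2\pi i c(c\tau+d)$, then eliminate the affine ambiguity $e^{\alpha z+\beta}$ by comparing zero sets (both sides vanish simply exactly on $L_\tau$, so the quotient is nonvanishing entire on the simply connected plane and admits a logarithm), by oddness of $\sigma$ (which holds since $G_k=0$ for odd $k$, so $\alpha=0$), and by the normalization $\sigma(w)=w+O(w^3)$ (so $e^\beta=1$). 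All of these steps check out; note that both routes ultimately rest on the same single nontrivial input, the quasi-modular $G_2$-law, and differ only in whether $\sigma$ is controlled directly by homogeneity of the absolutely convergent product or indirectly through $\wp_2$ plus a Liouville-type normalization argument. The paper's route is shorter because the homogeneity of $\sigma_{\mathrm{cl}}$ is immediate from the product; your route is more self-contained within the paper's own cast of functions ($\wp_2$, $\mathcal{P}$, $\zeta$) and needs no external facts about $\sigma_{\mathrm{cl}}$ beyond its definition.
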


\begin{proof}
    This follows from the definition of $\sigma$-function \cref{eq:def-sigma} and the modular transformations of Eisenstein series (see e.g. \cite{Zag}):
    \begin{align}
        G_{k}\left(\frac{a\tau+b}{c\tau+d}\right)=-\delta_{k,2}\cdot2\pi ic(c\tau+d)+(c\tau+d)^kG_k(\tau)
    \end{align}
    for $\begin{pmatrix}a&b\\c&d\end{pmatrix}\in\mathrm{SL}_2(\Z)$ and $k\geq2$, where $\delta_{k,2}$ is the Kronecker's delta.
\end{proof}

\begin{prop}
    For $r\ge0$ and $\begin{pmatrix}a&b\\c&d\end{pmatrix}\in\mathrm{SL}_2(\Z)$, we have
    $$(c\tau+d)^{-2r}\wp_{\{2\}^r}\left(\frac{z}{c\tau+d};\frac{a\tau+b}{c\tau+d}\right)=\sum_{j=0}^r\frac{1}{(r-j)!}\left(\frac{-2\pi ic}{c\tau+d}\right)^{r-j}\wp_{\{2\}^j}(z;\tau).$$
\end{prop}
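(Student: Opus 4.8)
The plan is to read off the transformation directly from the $\sigma$-function generating series in \cref{thm:genelating-function} (case $h=2$, so $\mu_2=-1$) combined with the modular transformation of $\sigma$ in \cref{lem:sigma modular}. Writing $\tau'=\frac{a\tau+b}{c\tau+d}$ throughout, the starting identity is
$$\sum_{r=0}^{\infty}(-1)^r\wp_{\{2\}^r}(z;\tau)Y^{2r}=\frac{\sigma(z-Y;\tau)\sigma(z+Y;\tau)}{\sigma(z;\tau)^2}.$$
First I would apply this identity with $\tau$ replaced by $\tau'$ and with the arguments rescaled as $z\mapsto \frac{z}{c\tau+d}$ and $Y\mapsto \frac{Y}{c\tau+d}$. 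The left-hand side then becomes the generating series $\sum_{r\ge0}(-1)^r\wp_{\{2\}^r}\!\left(\frac{z}{c\tau+d};\tau'\right)\frac{Y^{2r}}{(c\tau+d)^{2r}}$, whose coefficients are exactly the functions appearing in the statement, while the right-hand side is the ratio of three values of $\sigma$ at $\tau'$.

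Next I would substitute \cref{lem:sigma modular} into each of the three factors $\sigma\!\left(\frac{z\pm Y}{c\tau+d};\tau'\right)$ and $\sigma\!\left(\frac{z}{c\tau+d};\tau'\right)$. The automorphy prefactors $\frac{1}{c\tau+d}$ cancel between numerator and denominator, and the Gaussian exponents combine via $(z-Y)^2+(z+Y)^2-2z^2=2Y^2$, so that a single factor $\exp\!\left(\frac{2\pi icY^2}{c\tau+d}\right)$ survives, multiplying the untransformed ratio $\frac{\sigma(z-Y;\tau)\sigma(z+Y;\tau)}{\sigma(z;\tau)^2}$. Recognizing the latter once more through \cref{thm:genelating-function}, I arrive at
$$\sum_{r=0}^{\infty}(-1)^r\frac{\wp_{\{2\}^r}\!\left(\frac{z}{c\tau+d};\tau'\right)}{(c\tau+d)^{2r}}Y^{2r}=\exp\!\left(\frac{2\pi icY^2}{c\tau+d}\right)\sum_{j=0}^{\infty}(-1)^j\wp_{\{2\}^j}(z;\tau)Y^{2j}.$$

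Finally I would expand the exponential as $\sum_{m\ge0}\frac{1}{m!}\left(\frac{2\pi ic}{c\tau+d}\right)^m Y^{2m}$ and compare the coefficients of $Y^{2r}$ on both sides. Writing $m=r-j$, the right-hand coefficient is $\sum_{j=0}^{r}\frac{(-1)^j}{(r-j)!}\left(\frac{2\pi ic}{c\tau+d}\right)^{r-j}\wp_{\{2\}^j}(z;\tau)$; multiplying through by $(-1)^r$ and absorbing $(-1)^{r-j}$ into the power to form $\left(\frac{-2\pi ic}{c\tau+d}\right)^{r-j}$ yields precisely the claimed formula. I do not anticipate a genuine obstacle: the only points requiring care are choosing the rescaling of $Y$ so that the two generating series line up, tracking the signs correctly, and noting that comparison of Taylor coefficients in $Y$ is legitimate because all the series converge as Laurent expansions of meromorphic functions by the earlier lemmas.
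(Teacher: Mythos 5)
Your proof is correct and takes essentially the same route as the paper: the paper's proof also sets $P_\tau(z,\alpha)=\sum_{r\ge0}(-1)^r\wp_{\{2\}^r}(z;\tau)\alpha^{2r}=\frac{\sigma(z+\alpha)\sigma(z-\alpha)}{\sigma(z)^2}$, applies \cref{lem:sigma modular} to get $P_{\frac{a\tau+b}{c\tau+d}}\left(\frac{z}{c\tau+d},\frac{\alpha}{c\tau+d}\right)=\exp\left(\frac{2\pi ic\alpha^2}{c\tau+d}\right)P_\tau(z,\alpha)$, and compares coefficients of $\alpha^{2r}$. Your sign bookkeeping, including the cancellation $(z+Y)^2+(z-Y)^2-2z^2=2Y^2$ and the absorption of $(-1)^{r-j}$ into $\left(\frac{-2\pi ic}{c\tau+d}\right)^{r-j}$, is exactly what the paper's coefficient comparison carries out implicitly.
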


\begin{proof}
    Let $$P_\tau(z,\alpha)\ceq\sum_{r\ge0}(-1)^r\wp_{\{2\}^r}(z;\tau)\alpha^{2r}=\frac{\sigma(z+\alpha)\sigma(z-\alpha)}{\sigma(z)^2}.$$
    From the \Cref{lem:sigma modular}, we have
    $$P_{\frac{a\tau+b}{c\tau+d}}\left(\frac{z}{c\tau+d},\frac{\alpha}{c\tau+d}\right)=\exp\left(\frac{2\pi ic\alpha^2}{c\tau+d}\right)P_\tau(z,\alpha).$$
    Comparing the coefficients of $\alpha^{2r}$ of both sides, we have the conclusion.
\end{proof}

\section*{Acknowledgements}

The first named author expresses his gratitude to his academic advisor Professor Yasuo Ohno. The second named author expresses his gratitude to his academic advisor Professor Masanobu Kaneko. The authors would like to thank Professor Toshiki Matsusaka for his valuable comments on this paper.




\end{document}